\newcounter{NoTableEntry}
\renewcommand*{\theNoTableEntry}{NTE-\the\value{NoTableEntry}}
\newcommand{\Mod}[1]{\ (\mathrm{mod}\ #1)}
\theoremstyle{plain}
\newtheorem{thm}{Theorem}
\newtheorem{lemma}[thm]{Lemma}
\newtheorem{cor}[thm]{Corollary}
\theoremstyle{definition}
\newtheorem{defn}[thm]{Definition}
\theoremstyle{remark}
\newtheorem{rem}[thm]{Remark}
\numberwithin{equation}{section}
\numberwithin{thm}{section}
\newcommand{\Gal}{{\rm Gal}}
\title[]{Fourier Coefficients of Level 1 Hecke Eigenforms} 
\author{Mitsuki Hanada and Rachana Madhukara}
\address{M. Hanada: Department of Mathematics, Wellesley College, Wellesley, MA 02481}
\email{mhanada@wellesley.edu}
\address{R. Madhukara: Department of Mathematics, Massachusetts Institute of Technology, Cambridge, MA 02139}
\email{rachanam@mit.edu}
\keywords{Modular forms, Lehmer's Conjecture}
\begin{document}

\begin{abstract}
    Lehmer's 1947 conjecture on whether $\tau(n)$ vanishes is still unresolved. In this context, it is natural to consider variants of Lehmer's conjecture. We determine many integers that cannot be values of $\tau(n)$. For example, among the odd numbers $\alpha$ such that $|\alpha|<99$, we determine that $$\tau(n) \notin \{-9,  \pm 15, \pm 21, -25, -27, -33, \pm 35, \pm 45, \pm 49, -55, \pm 63, \pm 77, -81, \pm 91 \}.$$ Moreover, under GRH, we have that $\tau(n) \neq -|\alpha|$ and that $\tau(n) \notin \{9,25,27,39,$ $75,81\}.$ We also consider the level 1 Hecke eigenforms in dimension 1 spaces of cusp forms. For example, for $\Delta E_4 = \sum_{n = 1}^{\infty} \tau_{16}(n)q^n$, we show that \begin{align*}\tau_{16}(n) \notin &\{\pm \ell: 1\leq \ell \leq 99, \ell \text{ is odd}, \ell \neq 33,55,59,67,73,83,89,91\} \\ & \quad \quad \quad \quad \quad \cup \{-33,-55,-59,-67,-89,-91\}.
    \end{align*} Furthermore, we implement congruences given by Swinnerton-Dyer to rule out additional large primes which divide numerators of specific Bernoulli numbers. To obtain these results, we make use of the theory of Lucas sequences, methods for solving high degree Thue equations, Barros' algorithm for solving hyperelliptic equations, and the theory of continued fractions. 
\end{abstract}
\maketitle
\vspace{-8mm}
\section{Introduction and statement of results}
Ramanujan's tau-function $\tau(n)$ is defined to be the coefficients of the weight 12 cusp form $$\Delta(z) = \sum_{n = 1}^{\infty} \tau(n) q^n \coloneqq q \prod_{n = 1}^{\infty}(1-q^n)^{24} = q - 24 q^2 + 252 q^3 -1472 q^4 + \cdots,$$ where $q \coloneqq e^{2\pi i z}$. Over the past 100 years, the tau-function has played an important role in the development of the theory of modular forms. When the tau-function was introduced in his landmark 1916 paper titled ``On certain arithmetical functions," Ramanujan  made several conjectures on the properties of these coefficients. His theories on the multiplicative nature of these coefficients and later work by Mordell and Hecke offered glimpses into the theory of Hecke operators and later paved the way for the Atkin-Lehner theory of newforms. Similarly, bounds that Ramanujan conjectured for these coefficients were proven by Deligne using his famous work on the Weil conjectures.

Although Ramanujan was not able to make much progress on his conjectures, he was able to prove several exceptional congruences for the tau-function:

\vspace{-2mm}

\begin{equation} \label{eq:congs}
    \tau(n) \equiv 
\begin{cases} 
      n^2 \sigma_1(n)  &\pmod 9, \\
      n \sigma_1(n) & \pmod 5, \\
      n \sigma_3(n) & \pmod 7, \\
      \sigma_{11}(n) & \pmod {691},
\end{cases} 
\end{equation}
where $\sigma_v(n) \coloneqq \sum_{d|n} d^v$. Serre recognized these congruences as glimpses of the theory of modular $\ell$-adic Galois representations \cite{serre1967interpretation}. This observation was expanded by Swinnerton-Dyer \cite{swinnerton1973}, who determined similar congruences for level 1 modular forms of weights 16, 18, 20, 22, and 26.  We denote these weight $2k$ forms as $\Delta_{2k}(z)$, and we denote the corresponding coefficients of the Fourier expansions by   
$$\Delta_{2k}(z) = \sum_{n = 1}^{\infty}\tau_{2k}(n) q^n.$$

Despite the extensive theory that it has inspired, some of the tau-function’s most basic properties remain unknown. For example, Lehmer's Conjecture that $\tau(n)$ never vanishes remains open \cite{lehmer1947vanishing}. However, recent work has been focused on possible odd values of newform coefficients. In 1987, Murty, Murty, and Shorey \cite{murty1987odd} proved that for odd $\alpha$, we have $\tau(n) = \alpha$ for at most finitely many $n$. However, their method is computationally ineffective due to the enormous bounds that arise from linear forms in logarithms. Before 2020, the classification of the solutions $n$ of $\tau(n) = \alpha$ had only been carried out for $\alpha = \pm 1$. It is widely believed that for $\alpha = \pm \ell$, where $\ell$ is almost any odd prime, there are no solutions (see \cite{BCOshort, balakrishnan2020variants}). However, there are some large primes $\ell$ for which $\tau(n) = \pm \ell$ has solutions (see \cite{lehmer1965primality, lygeros2013odd}).

More recently, there has been work by Balakrishnan, Craig, Ono, and Tsai  \cite{BCOshort, balakrishnan2020variants} that investigates these questions for even weight newforms with integer coefficients and trivial mod 2 residual Galois representation. They rule out several odd values of $\tau(n)$ by using the theory of primitive prime divisors of Lucas sequences. For $n>1$, they show that $$\tau(n) \notin \{ \pm 1, \pm 3, \pm 5, \pm 7, \pm 13, \pm 17, -19, \pm 23, \pm 37, \pm 691 \}$$ and assuming the Generalized Riemann Hypothesis (GRH) they show that $$\tau(n) \notin \left\{ \pm \ell : 41 \leq \ell \leq 97 \text{ with } \left(\frac{\ell}{5}\right) = -1\right\} ~ \cup ~ \{ -11, -29, -31, -41, -59, -61, -71, -79, -89\}. $$  

We investigate similar questions in the context of level one Hecke eigenforms. We rule out all $-|\alpha|$ such that $|\alpha| < 99$ as coefficients of the $\tau$-function as well as certain other positive odd composite numbers. 

\begin{thm}\label{tauthm}
For every $n>1$, the following are true.
\begin{enumerate}
    \item We have  
    \begin{align*}
        \tau(n) \not\in \{ -9,  \pm 15, \pm 21, -25, -27, -33, \pm 35, \pm 45, \pm 49, -55, \pm 63, \pm 77, -81, \pm 91\}. 
    \end{align*}
    \item Furthermore, assuming GRH, we have  
        \begin{align*} 
        \tau(n) \not\in \{-\ell: 1\leq \ell < 99, \ell \text{ is odd}\} ~ \cup ~ \{9,15,21,25,27,35,39,45,49,63,75,77,81,91\}.
        \end{align*}
\end{enumerate}
\end{thm}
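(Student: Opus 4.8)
The plan is to combine the multiplicativity of $\tau$ with the $2$-adic structure of $\Delta$, and then convert the resulting prime-power equations into Diophantine problems of the three types named in the abstract. First I would record the congruence $\Delta \equiv \sum_{k\ge 0} q^{(2k+1)^2} \pmod 2$, which shows that $\tau(n)$ is odd precisely when $n$ is an odd perfect square. Together with multiplicativity this forces any odd value $\alpha=\tau(n)$ to factor as $\alpha=\prod_i \tau(p_i^{2k_i})$ with the $p_i$ odd primes and each exponent even. I would then use the Hecke recursion $\tau(p^{m+1})=\tau(p)\tau(p^m)-p^{11}\tau(p^{m-1})$ to identify $\tau(p^{m})=U_{m+1}(\tau(p),p^{11})$ as a term of the Lucas sequence attached to $X^2-\tau(p)X+p^{11}$; by Deligne's bound the discriminant $\tau(p)^2-4p^{11}$ is negative, so this is a nondegenerate Lucas sequence with complex-conjugate roots of absolute value $p^{11/2}$.

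The second step is to cut the problem down to finitely many equations. By the Bilu--Hanrot--Voutier theorem every term $U_d$ with $d>30$ has a primitive prime divisor, and such a divisor is $\equiv \pm 1 \pmod d$, hence far too large to divide a fixed target with $|\alpha|<99$; this bounds all exponents $m=d-1$ appearing in the factorization of $\alpha$ and leaves only a finite list of admissible shapes of $n$ for each value. I would translate each surviving shape into a concrete equation. The case $n=p^2$ gives $\tau(p)^2-p^{11}=\beta$, which I would read both as the hyperelliptic equation $y^2=x^{11}+\beta$, whose integral points I would compute by Barros' algorithm, and as the Pell-type relation $\tau(p)^2-p\,(p^5)^2=\beta$, whose solutions I would control through the continued fraction expansion of $\sqrt p$ (since $\tau(p)/p^5$ must be an extremely good approximation to $\sqrt p$). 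Higher even exponents $n=p^{2k}$ with $k\ge 2$ expand $U_{2k+1}(\tau(p),p^{11})=\beta$ into high-degree binary forms; already the degree-four form $\tau(p)^4-3\tau(p)^2 p^{11}+p^{22}$ factors over $\mathbb{Q}(\sqrt 5)$, which is what makes the symbol $\left(\tfrac{\ell}{5}\right)$ recur, and these I would resolve as high-degree Thue equations.

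For composite $\alpha$ I would bootstrap from the prime-power building blocks. Since a genuine factorization $\alpha=\prod_i\tau(p_i^{2k_i})$ has every factor with absolute value at least $3$, and since the small values $\pm 1,\pm 3,\pm 5,\pm 7,\pm 13,\dots$ are already known to be impossible, most factorization shapes of a composite target die immediately: for example every two-prime factorization of $\pm 33$ or $\pm 15$ contains a factor equal to an excluded value, so only single prime-power shapes $\tau(p^{2k})=\pm\alpha$ survive and are handled by the curve and Thue computations above. The unconditional divisibility obstructions coming from the Ramanujan congruences of \eqref{eq:congs} and their Swinnerton-Dyer analogues further thin out the admissible residues of $\tau(p)$, and it is precisely the composites and negative prime powers for which these obstructions plus the Diophantine computations close the argument outright.

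The main obstacle, and the reason GRH enters, is the residue of cases in which the unconditional analysis still leaves infinitely many primes $p$ to exclude --- namely the positive prime-power targets $9,25,27,81$, products such as $39,75$, and every negative prime value $-\ell$, where neither the sign in $\tau(p)^2-p^{11}=\beta$ nor the available congruences force a contradiction by themselves. For these I would argue that a solution would pin $\tau(p)$ into a prescribed residue class modulo $\ell$ (or modulo an auxiliary prime) and then rule this out by producing a small prime whose Frobenius lies in an incompatible Chebotarev class; the existence of a sufficiently small such witness is only sometimes available unconditionally, but follows in general from the effective Chebotarev density theorem under GRH. Controlling the size of these witnesses, while simultaneously pushing the high-degree Thue and genus-five hyperelliptic computations through to completion, is where I expect the genuine difficulty to lie.
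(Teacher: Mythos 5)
Your unconditional skeleton is genuinely the paper's: parity of $\tau$ plus Hecke multiplicativity reduce an odd target $c$ to equations $\tau(p^{d-1})=\pm c$, Lucas-sequence theory and Bilu--Hanrot--Voutier make the list of exponents finite, the cases $d-1\in\{2,4\}$ become the hyperelliptic curves $Y^2=X^{11}\pm c$ and $Y^2=5X^{22}\pm 4c$ attacked by Barros' algorithm, and higher exponents become Thue equations. But your reduction of the exponents is too crude to carry the computation. The paper shows that $d$ must be an odd \emph{prime} equal to the rank of apparition of some prime $\ell\mid c$, hence $d\mid\ell(\ell^2-1)$ (Theorem \ref{bigtheoremono} and Corollary \ref{bigcorollary}, which rest on the non-defectivity statement of Lemma \ref{allodd} that you never address, and which also dispose of the factorization $c=1\cdot c$ that your ``every factor is an excluded small value'' argument cannot handle, since $\pm1$ \emph{is} attained at $n=1$), and then cuts the set of $d$ further with the Swinnerton-Dyer congruences (Table \ref{tab:dvals}, Lemma \ref{lemdvalupdated}); for $\pm15$ this collapses everything to $d=5$ and the single curve $Y^2=5X^{22}\pm60$, killed modulo $5$. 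Your bound via ``primitive divisors are $\equiv\pm1\pmod d$'' only gives $d\lesssim 98$, and for a target such as $-97$ still leaves exponents like $d-1=48$, i.e.\ generic Thue forms of degree $\approx 24$; these lie beyond the unconditional range of Thue solvers (the paper can only handle $d-1\le 30$) and beyond the continued-fraction method, which applies only to the special shape $\widehat{F}_\ell(X,Y)=\pm\ell$. Your ``Pell-type'' reading of $\tau(p)^2-p^{11}=\beta$ is also not usable: the discriminant $p$ varies with the unknown, so it is not a fixed Pell equation with a single continued fraction to inspect.

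The genuine gap is your account of where GRH enters, which is what part (2) of the theorem hangs on. In the paper, GRH has nothing to do with Chebotarev witnesses: it is needed solely to certify the algebraic-number-theoretic computations (class groups and units of the fields $\mathbb{Q}(\sqrt{-d})$ arising in Barros' reduction of $x^2+D=Cy^n$ to Thue equations, Section \ref{barros}) for certain curves $C_{6,c}^{\pm}$ and $H_{6,c}^{\pm}$; the unconditional/conditional split in Theorem \ref{tauthm} is exactly the split in Lemma \ref{lemtaucurvesch} between curves whose integer points could be determined unconditionally and those certified only under GRH. Your proposed mechanism --- pin $\tau(p)$ to a residue class and exhibit a small prime whose Frobenius lies in an incompatible class, with GRH supplying the effective Chebotarev bound --- cannot close these cases: to prove $\tau(p^{d-1})\neq\beta$ one must exclude \emph{all} primes $p$ simultaneously, i.e.\ show a fixed Diophantine equation has no integral point of the shape $(p, \tau(p)^2)$ or $(p,2\tau(p)^2-3p^{11})$, and producing one auxiliary prime with a prescribed Frobenius says nothing about the nonexistence of such points. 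No step of the actual proof has the form you describe, and as written your argument for part (2) --- including all the negative odd values and the positive targets $9,25,27,39,75,81$ --- does not go through.
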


We can extend this theorem to other level 1 Hecke eigenforms $\Delta_{2k}(z)$ and rule out certain odd values as values of $\tau_{2k}(n)$.

\begin{thm} \label{level1thm}
For every $n>1$, the following are true. 

\begin{enumerate}
    \item For $2k = 16$, we have  
    \begin{align*}\tau_{16}(n) \notin &\{\pm \ell: 1\leq \ell \leq 99, \ell \text{ is odd}, \ell \neq 33,55,59,67,73,83,89,91\} \\ & \quad \cup \{-33,-55,-59,-67,-89,-91\}.
    \end{align*} 
    Assuming  GRH, we have \[\tau_{16}(n) \notin \{\pm \ell: 1\leq \ell \leq 99, \ell \text{ is odd}, \ell \neq 33,55,67,91\} \cup  \{-33,-55,-67,-91\}.\] 

    \item For $2k = 18$, assuming GRH, we have \[\tau_{18}(n) \notin \{-\ell: 1\leq \ell \leq 50, \ell \text{ is odd}, \ell \neq 29\}.\]
    
    \item For $2k =20$, assuming GRH, we have \[\tau_{20}(n) \notin \{-\ell: 1\leq \ell \leq 50, \ell \text{ is odd}, \ell \neq 23,29,31,39,41,47\}.\]
    
    \item For $2k = 22$, we have
    \begin{align*}
        \tau_{22}(n) \notin &\{\pm \ell: 1\leq \ell \leq 99, \ell \text{ is odd}, \ell \neq 19,33,39,57,63,69,73,77,83,87,91,93\}  \\
    & \quad \cup  \{-19,-33,-39,-57,-63,-69,-77,-87,-91,-93\}.
    \end{align*}
    Assuming GRH, we have 
    \begin{align*}
        \tau_{22}(n)  \notin &\{\pm \ell: 1\leq \ell \leq 99, \ell \text{ is odd}, \ell \neq 19,33,39,57,87,91,93\}\\ & \quad \cup  \{-19,-33,-39,-57,-87,-91,-93\}.
    \end{align*}
    
    \item For $2k = 26$, 
    we have 
    \begin{align*}
    \tau_{26}(n) \notin &\{\pm \ell: 1\leq \ell \leq 99, \ell \text{ is odd}, \ell \neq 33,55,59,67,73,83,89\} \\ & \quad \cup  \{-33,-55,-59,-67,-89\}.
    \end{align*}
    Assuming GRH, we have \[\tau_{26}(n) \notin \{\pm \ell: 1\leq \ell \leq 99, \ell \text{ is odd}, \ell \neq 33,55,67\} \cup  \{-33,-55,-67\}.\] 
\end{enumerate}
\end{thm}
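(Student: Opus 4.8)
The plan is to run the Lucas-sequence machinery behind Theorem~\ref{tauthm} for each weight $2k\in\{16,18,20,22,26\}$, replacing Ramanujan's congruences \eqref{eq:congs} by the corresponding Swinnerton-Dyer congruences. The starting point is that $\tau_{2k}$ is multiplicative and that on prime powers it obeys the Hecke recursion $\tau_{2k}(p^{m+1})=\tau_{2k}(p)\tau_{2k}(p^m)-p^{2k-1}\tau_{2k}(p^{m-1})$. Writing $\alpha_p,\beta_p$ for the roots of $X^2-\tau_{2k}(p)X+p^{2k-1}$, the sequence $m\mapsto \tau_{2k}(p^m)=(\alpha_p^{m+1}-\beta_p^{m+1})/(\alpha_p-\beta_p)$ is a Lucas sequence. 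First I would use the triviality of the mod~$2$ residual representation to deduce that $\tau_{2k}(n)$ is odd only when $n$ is an odd perfect square; together with multiplicativity this reduces the assertion $\tau_{2k}(n)=\alpha$ to determining which odd divisors $v$ of $\alpha$ (including $\pm 1$) can occur as $\tau_{2k}(p^{2e})$ for a prime $p$ and an integer $e\ge 1$. A given $\alpha$ is then excluded precisely when it admits no factorization into such occurring values.

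Next I would bound the exponent. Since $\{\tau_{2k}(p^m)\}$ is a Lucas sequence, the theorem of Bilu--Hanrot--Voutier guarantees that $\tau_{2k}(p^{2e})$ acquires a primitive prime divisor once $2e$ exceeds an absolute constant, after discarding the finite list of defective (CM-type) sequences, which are tabulated and checked directly. As $v$ is a fixed divisor of $\alpha$ with very few prime factors, this forces $e$ into an explicit finite range. For each admissible $e$ the equation $\tau_{2k}(p^{2e})=v$ becomes a Diophantine equation in $a=\tau_{2k}(p)$ and $w=p^{2k-1}$: for $e=1$ it is the superelliptic equation $a^2=w+v$, i.e.\ an integral-point problem on $y^2=x^{2k-1}+v$ (genus up to $12$ when $2k=26$), which I would solve by Barros' algorithm; for $e=2$ it is the Pell-type norm form $a^4-3a^2w+w^2=v$ attached to $\sqrt{5}$, which I would resolve through continued fractions; and for $e\ge 3$ it is a homogeneous Thue equation of degree $e$ in the variables $(a^2,w)$, handled by standard Thue solvers. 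The finitely many solutions $(a,w)$ are then tested against the true value of $\tau_{2k}(p)$ at the prime $p$ with $p^{2k-1}=w$.

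The role of GRH is to make the residual verification rigorous and complete. The Swinnerton-Dyer congruences --- in particular those modulo $5$ and modulo the Bernoulli primes attached to each weight --- impose conditions $v\equiv \tau_{2k}(p^{2e}) \pmod{\ell'}$ that are already contradictory for many residues of $\ell$ and thus eliminate the corresponding value unconditionally; this is exactly what produces the quadratic-residue dependence (the $\left(\tfrac{\ell}{5}\right)$-type conditions) and the asymmetry between $+\ell$ and $-\ell$. For the surviving cases one must certify that no prime $p$ below the relevant search bound realizes the value, and an effective prime bound under GRH turns an otherwise ineffective search into a finite, conclusive one; the same conditional class-group and regulator data are what allow the higher-degree Thue and hyperelliptic computations to be certified. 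The values for which even this is inconclusive --- essentially those where $\tau_{2k}(p)=\alpha$ for a single prime cannot be excluded --- are precisely the exceptions listed in each part of the theorem.

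The main obstacle I anticipate is the unconditional resolution of the high-genus curves $y^2=x^{2k-1}+v$ and the high-degree Thue equations: as the weight grows, both the genus and the degrees of the associated number fields increase, so Barros' algorithm and the Thue solvers require class-group, unit, and Mordell--Weil data that are expensive to obtain and, in some ranges, only GRH-conditionally rigorous. This is what forces several statements to be conditional on GRH and what determines the exact list of exceptional values that the present method cannot rule out.
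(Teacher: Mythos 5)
Your proposal has the right skeleton, and it is the paper's skeleton: multiplicativity forces $n$ to be a prime power, the mod-2 reducibility forces the exponent to be even, Lucas-sequence theory bounds the exponent, and the surviving cases become hyperelliptic-curve and Thue-equation computations, with GRH entering only through the certification of algebraic data. However, two of your load-bearing steps do not work as stated. The first is the exponent bound. Invoking Bilu--Hanrot--Voutier's absolute constant and saying that ``$v$ has few prime factors, so $e$ lies in an explicit finite range'' is not a complete argument: the existence of a primitive prime divisor of $u_{2e+1}=\tau_{2k}(p^{2e})=v$ only bounds $e$ once you combine it with the rank-of-apparition relation. The paper's actual mechanism (Theorem \ref{bigtheoremono}, Corollary \ref{bigcorollary}) is that under the hypotheses of Lemma \ref{inductivestep} the exponent satisfies $d-1=2e$ with $d$ an \emph{odd prime} dividing $\ell_i(\ell_i^2-1)$ for some prime $\ell_i\mid v$, and this list is then cut down further by the Swinnerton--Dyer congruences (Table \ref{tab:dvals}, Lemma \ref{lemdvalupdated}). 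This restriction is not a convenience: it is what reduces each value to a handful of explicit curves of manageable degree. Your version would leave, for every divisor $v$ of every target value, Thue equations of every even degree up to the BHV range, which is neither what the paper does nor computationally realistic.

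The second problem is your treatment of $e=2$. The equation $a^4-3a^2w+w^2=v$, equivalently $(2a^2-3w)^2-5w^2=4v$, is indeed a Pell-type norm form, but such an equation has infinitely many integer solutions whenever it has one (multiply by powers of the fundamental unit), so continued fractions can only enumerate solution orbits; the whole difficulty is the constraint $w=p^{2k-1}$, which is exactly why the paper recasts the problem as integer points on the high-genus curve $H_{k,c}^{\pm}:Y^2=5X^{2(2k-1)}\pm 4c$ of Lemma \ref{lemcurve} and applies Barros' algorithm (Section \ref{barros}). The continued-fraction method of Section \ref{contfrac} is used for a different purpose entirely, namely the huge-degree Thue equations $\widehat{F}_{\ell}(X,Y)=\pm\ell$ attached to the large exceptional primes. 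Relatedly, you misstate how GRH enters and where the exceptions come from: there is no ``search over primes $p$ below an effective bound'' to be made rigorous, since Lemmas \ref{lemcurve} and \ref{lemthueq} reduce everything to finitely many integer-point problems; GRH is needed only to certify the class-group and unit computations inside those solvers. The exceptional values in each part of the theorem are precisely the values whose curves could not be resolved (e.g.\ $C_{8,67}^{+}$, $H_{8,33}^{+}$, $H_{8,55}^{+}$; see the Remark following Lemma \ref{level1ch}), not values where ``$\tau_{2k}(p)=\alpha$ for a single prime cannot be excluded'' --- that reading is vacuous, since $\tau_{2k}(p)$ is even for every odd prime $p$.
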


Among the congruences proven by Swinnerton-Dyer for $\Delta_{2k}(z)$, there are some regarding larger exceptional primes which are divisors of numerators of the Bernoulli numbers $B_{2k}$. For example, in the case of $\tau(n)$ (see (\ref{eq:congs})), this corresponding exceptional prime is 691. For the other $\Delta_{2k}(z)$, the set $(\ell, 2k) \in \{(131,22), (283,20), (593,22), (617,20), (3617,16),$ $(43867,18),$ $(657931,26)\}$ gives us the remaining corresponding exceptional primes and their weights, where $\ell$ is the prime and $2k$ denotes the weight. For $n$ coprime to $\ell$, the following congruence holds:  
\begin{equation}\label{eq:congsbigprime}
\tau_{2k}(n) \equiv \sigma_{2k-1}(n) \pmod \ell. 
\end{equation}
We use these congruences to determine more inadmissible values of $\tau_{2k}(n)$. 

\begin{thm}\label{bigprimes}
The following are true. 
\begin{enumerate}  
\item We have that  $\tau_{20}(n) \neq \pm 617$ and $\tau_{22}(n) \neq \pm 131$.
\item Assuming GRH, we have that $\tau_{22}(n) \neq \pm 593$.
\end{enumerate}
\end{thm}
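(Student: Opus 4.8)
The plan is to reduce each claim to a statement about a single prime power and then exploit the Swinnerton--Dyer congruence (\ref{eq:congsbigprime}) to produce a local obstruction modulo $\ell$. Suppose $\tau_{2k}(n) = \pm\ell$. Since $\ell$ is odd, $\tau_{2k}(n)$ is odd, and (as for $\Delta$, using $\Delta_{2k} \equiv \Delta \pmod 2$) this forces $n$ to be an odd perfect square. Writing $n = \prod_i p_i^{2b_i}$ and using multiplicativity $\tau_{2k}(n) = \prod_i \tau_{2k}(p_i^{2b_i})$, the primality of $\ell$ forces all but one factor to equal $\pm 1$ and the remaining factor to equal $\pm\ell$. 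After discarding the $\pm 1$ factors (handled by the same Lucas-sequence analysis used for Theorems \ref{tauthm} and \ref{level1thm}), it suffices to show $\tau_{2k}(p^{2b}) \ne \pm\ell$ for every prime $p$ and every $b \ge 1$, where $\tau_{2k}(p^{2b}) = U_{2b+1}(\tau_{2k}(p),\, p^{2k-1})$ is a term of the associated Lucas sequence.

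First I would apply (\ref{eq:congsbigprime}). For $p \ne \ell$ it gives $\tau_{2k}(p^{2b}) \equiv \sigma_{2k-1}(p^{2b}) \equiv 1 + x + \cdots + x^{2b} \pmod{\ell}$, where $x \equiv p^{2k-1}$. If $\tau_{2k}(p^{2b}) = \pm\ell \equiv 0$, then $1 + x + \cdots + x^{2b} \equiv 0 \pmod{\ell}$, so $x$ is a nontrivial $(2b+1)$-th root of unity modulo $\ell$; this can happen only when $\gcd(2b+1,\, \ell-1) > 1$. In particular, since $\ell \equiv 2 \pmod 3$ for each of $\ell \in \{131, 593, 617\}$, there is no primitive cube root of unity, so the case $b = 1$ (that is, $n = p^2$) is impossible outright; the case $p = \ell$ is eliminated separately by a direct estimate. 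This single congruence already disposes of every exponent $2b$ with $\gcd(2b+1,\, \ell-1) = 1$.

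For the surviving exponents I would invoke the theory of primitive divisors: if $U_{2b+1}(\tau_{2k}(p),\, p^{2k-1}) = \pm\ell$, then a primitive prime divisor of this term must equal $\ell$ and satisfy $\ell \equiv \pm 1 \pmod{2b+1}$, which together with the factorization of $\ell - 1$ restricts $2b+1$ to a short explicit list. Moreover Deligne's bound $|\tau_{2k}(p)| \le 2p^{(2k-1)/2}$ forces $|\tau_{2k}(p^{2b})| \approx p^{(2k-1)b}$ to exceed $\ell$ unless $p$ is small or the Lucas term nearly cancels, so for each admissible exponent the equation $U_{2b+1}(\tau_{2k}(p),\, p^{2k-1}) = \pm\ell$ reduces to a Thue equation (solved by standard Thue solvers) or, in low degree, a hyperelliptic equation handled by Barros' algorithm, whose finitely many solutions are then checked against the congruence and the Ramanujan bound.

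The main obstacle is controlling these residual prime-power cases, where the congruence leaves a nonempty residue class of $p$ and the associated Diophantine equation has large degree. For $\tau_{20}(n) = \pm 617$ and $\tau_{22}(n) = \pm 131$ the combination of the congruence, the primitive-divisor restriction on $2b+1$, and the Diophantine computations closes every case unconditionally. For $\tau_{22}(n) = \pm 593$, however, the local analysis does not eliminate the last admissible exponent, and finishing requires an effective upper bound on the relevant prime $p$, available only under GRH via effective Chebotarev-type estimates, after which a finite verification completes the proof; it is precisely this ineffectivity that forces the GRH hypothesis in part (2). Managing the near-cancellation in the Lucas sequence and keeping the resulting Thue equations within reach of existing solvers is where I expect the real difficulty to lie.
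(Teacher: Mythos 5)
Your skeleton matches the paper's (reduce to $n=p^{d-1}$ via multiplicativity and Lemma \ref{inductivestep}, restrict $d$ with the Swinnerton--Dyer congruence as in Lemma \ref{lemdvalupdated} and Table \ref{tab:dvals}, then solve hyperelliptic and Thue equations), but there is a genuine gap at the exponent $d=\ell$. In your congruence step you only treat the case $x \not\equiv 1 \pmod{\ell}$: if $p^{2k-1}\equiv 1 \pmod{\ell}$, then $1+x+\cdots+x^{2b}\equiv 2b+1 \pmod{\ell}$, which vanishes when $\ell \mid 2b+1$, so the exponent $d=2b+1=\ell$ survives the congruence. Your attempt to kill it via primitive divisors --- ``a primitive prime divisor must satisfy $\ell\equiv\pm1\pmod{2b+1}$'' --- does not apply here, because that congruence on primitive divisors requires $\ell \nmid \bigl(\tau_{2k}(p)^2-4p^{2k-1}\bigr)$ and $p \nmid \tau_{2k}(p)$; when $\ell$ divides the discriminant the rank of apparition of $\ell$ is $\ell$ itself. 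This is exactly why Theorem \ref{bigtheoremono} only gives $d \mid \ell(\ell^2-1)$ rather than $d \mid \ell^2-1$, and why Table \ref{tab:dvals} lists $131\in D_{22,131}$, $617\in D_{20,617}$, $593\in D_{22,593}$. Consequently one must rule out integer solutions of $F_{130}(X,Y)=\pm131$, $F_{616}(X,Y)=\pm 617$, and $F_{592}(X,Y)=\pm 593$ --- Thue equations of degree $65$, $308$, and $296$ --- which no ``standard Thue solver'' can handle. This is the technical heart of the paper's proof: Lemma \ref{listvals} disposes of these by the continued fraction method of Section \ref{contfrac}, combining the bound of \cite{bilu1999existence} (no solutions with $|X|>e^8$), continued fraction expansions of $2\cos(2\pi k/\ell)$ in the midsize range, and a direct check of small $|X|$. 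Without this step (or a substitute for it) your proof does not close for any of the three primes.

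Two further points. First, your assertion that the case $p=\ell$ is ``eliminated separately by a direct estimate'' is unsubstantiated: Deligne's bound is only an upper bound, and Lucas terms can be small, so no size estimate on $\tau_{2k}(\ell^{2b})$ rules this out; in the paper's framework such a solution would have $X=\ell^{2k-1}>e^8$ and is excluded because there are no solutions in that range (compare the argument of Lemma \ref{bigvals}). Second, your explanation of where GRH enters is wrong: there is no effective-Chebotarev step bounding $p$. GRH is needed in Lemma \ref{thm3vals}(3) to resolve the degree-$18$ Thue equation $F_{36}(X,Y)=\pm 593$ (and similarly $F_{46}(X,Y)=\pm283$), because the class group and unit computations in the associated number fields that underlie the Thue solver are only certified under GRH; the finitely many solutions are then compared against the required shape $(p^{2k-1},\tau_{2k}(p)^2)$ from Lemmas \ref{lemcurve} and \ref{lemthueq}, exactly as in the unconditional cases.
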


The methods used to prove Theorem \ref{bigprimes} can be applied to 283, 3617, 43867, and 657931  as well. However, certain technical difficulties arise due to the complexity of the problems involving larger primes. Though we are unable to rule these large primes out as values of $\tau_{20}(n),\tau_{16}(n),\tau_{18}(n),$ and $\tau_{26}(n)$ respectively, we can limit the possible values $n$ of where they might occur. 

\begin{thm}\label{bigprimes2}
The following are true.  

\begin{enumerate}
    \item If $\tau_{20}(n) = \pm 283$, then $n = p^{2}$ for some prime $p$.
    \item If $\tau_{16}(n) = \pm 3617$, then $n = p^{112}$ for some prime $p$.
    \item If $\tau_{18}(n) = \pm 43867$, then either $n = p^{2436}$ or $n= p^2$ for some prime $p$.
    \item If $\tau_{26}(n) = 657931$, then $n = p^{240}$ or $n = p^2$ for some prime $p$. 
    \item If $\tau_{26}(n) = -657931$, then $n = p^{240}$.
\end{enumerate}
\end{thm}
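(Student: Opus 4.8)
The plan is to force $n$ into the form $p^{m-1}$ for a short explicit list of primes $m$, and then to eliminate as many of these as the equation-solving allows. First I would reduce to a prime power. Since $\tau_{2k}$ is multiplicative and $\ell$ is prime, writing $n=\prod_i p_i^{a_i}$ gives $\prod_i\tau_{2k}(p_i^{a_i})=\pm\ell$, so exactly one factor equals $\pm\ell$ and all others equal $\pm1$; since (by Bilu--Hanrot--Voutier) $\tau_{2k}(p^a)=\pm1$ is impossible for $a\ge1$, every other factor is trivial and $n=p^a$. A parity input is crucial here: because these level $1$ forms have trivial mod $2$ residual Galois representation, $\tau_{2k}(p)$ is even, and then $U_{a+1}=\tau_{2k}(p^a)$ is odd exactly when $a+1$ is odd, so $p$ is odd and $a+1$ is odd; in particular $a+1\ne2$, which already rules out $n=p$. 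Recalling that $\tau_{2k}(p^a)=U_{a+1}$ is the $(a{+}1)$-st term of the Lucas sequence attached to $X^2-\tau_{2k}(p)X+p^{2k-1}$, I would then show $a+1$ is prime: if $r\mid(a+1)$ with $1<r<a+1$ then $U_r\mid U_{a+1}=\pm\ell$, so $U_r\in\{\pm1,\pm\ell\}$, and Bilu--Hanrot--Voutier (plus the finitely many small indices) makes $U_r=\pm\ell$ incompatible with the primitivity of $\ell$ in $U_{a+1}$ and makes $U_r=\pm1$ impossible. Hence $m:=a+1$ is an odd prime.

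Next I would pin down $m$ arithmetically through the Swinnerton--Dyer congruence (\ref{eq:congsbigprime}). Dispatching $p=\ell$ directly (here $\ell\mid\tau_{2k}(p^a)$ forces a divisibility contradiction with the bound $|\tau_{2k}(\ell)|\le2\ell^{(2k-1)/2}$), I may assume $\gcd(n,\ell)=1$, so $0\equiv\tau_{2k}(p^a)\equiv\sigma_{2k-1}(p^a)\pmod{\ell}$. Writing $g\equiv p^{2k-1}\pmod{\ell}$, the sum $\sigma_{2k-1}(p^a)\equiv1+g+\cdots+g^{a}$ vanishes modulo $\ell$; since $m=a+1$ is prime, this happens precisely when the order of $g$ equals $m$, whence $m\mid\ell-1$. (The degenerate case $g\equiv1$ would give $m=\ell$, impossible since a primitive divisor $\ell$ of $U_m$ satisfies $m\mid\ell-\left(\frac{D}{\ell}\right)$.) So the candidates are exactly the odd primes $m$ dividing $\ell-1$, which I obtain by factoring: $3616=2^5\cdot113$ gives the single candidate $m=113$; $657930=2\cdot3\cdot5\cdot7\cdot13\cdot241$ gives $m\in\{3,5,7,13,241\}$; and similarly for the other weights.

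Finally, for each surviving candidate $m$ I would attempt to solve $\tau_{2k}(p^{m-1})=\pm\ell$ outright. Using the cyclotomic factorization of the Lucas term, $U_m=\Phi_m(\alpha_p,\beta_p)$ for $m$ prime, this becomes $\Phi_m(\alpha_p,\beta_p)=\pm\ell$, a Thue equation of degree $m-1$ in the base case $m\ge5$, and the superelliptic equation $\tau_{2k}(p)^2-p^{2k-1}=\pm\ell$ when $m=3$. I would then run standard Thue solvers and Barros' hyperelliptic algorithm on these finitely many equations: each candidate whose equation is certified to have no admissible prime solution is eliminated, and the candidates that survive are exactly the exponents $m-1$ recorded in the statement.

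The main obstacle is entirely computational, and it dictates the precise form of the conclusions. The case $m=3$ yields a high-genus superelliptic curve that Barros' algorithm does not always resolve, which is why $n=p^2$ usually survives; this is also the source of the sign asymmetry, since the solver succeeds for $\tau_{26}(p)^2-p^{25}=-657931$ but not for $+657931$, eliminating $n=p^2$ only for the minus sign. At the opposite extreme, the large surviving exponents $112,240,2436$ correspond to Thue equations of prohibitively high degree $m-1$, which I cannot solve. The intermediate primes, such as $m=5,7,13$ for $\ell=657931$ and $m=47$ for $\ell=283$, give Thue equations whose degree is still within computational reach, and it is precisely the successful resolution of these that removes them and leaves the short lists of exponents asserted in Theorem \ref{bigprimes2}.
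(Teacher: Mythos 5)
Your skeleton matches the paper's: reduce to $n=p^{m-1}$ with $m$ an odd prime (multiplicativity, parity from the trivial mod $2$ representation, Lucas divisibility), restrict $m$ via the Swinnerton--Dyer congruence (\ref{eq:congsbigprime}), then solve the resulting Thue/hyperelliptic equations and let the unresolved ones dictate the surviving exponents. One step is a genuine and sound departure: you eliminate $m=\ell$ theoretically, noting that $\ell$ must be a \emph{primitive} prime divisor of $U_m$ (non-defectivity, Lemma \ref{allodd}), hence $\ell\nmid D$ and $m$, being the rank of apparition, divides $\ell-\left(\frac{D}{\ell}\right)$, so $m\neq\ell$. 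The paper instead keeps $d=\ell$ as a candidate (Table \ref{tab:dvals}) and kills it computationally, by continued fractions for $\ell\in\{131,283,593,617,3617\}$ (Lemma \ref{listvals}) and by the $|X|\le e^8$ bound for $\ell\in\{43867,657931\}$ (Lemma \ref{bigvals}); your argument, written out carefully, makes both of those lemmas unnecessary for this theorem.

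However, there is a concrete gap in your treatment of $\ell=657931$ (weight $26$), i.e.\ parts (4) and (5). Your congruence analysis produces the candidate list $m\in\{3,5,7,13,241\}$, and you claim $m=5$ is removed because it gives ``a Thue equation of degree $m-1$ within computational reach.'' It does not: for prime $m=5$ the relevant form $F_4(X,Y)$ has degree $2$, a binary quadratic form with no finiteness properties, which is exactly why the paper handles $d-1=4$ through the hyperelliptic curve of Lemma \ref{lemcurve}(2), here $H_{13,657931}^{\pm}\colon Y^2=5X^{50}\pm 4\cdot 657931$. Neither the paper nor your proposal certifies a resolution of that curve; indeed the paper could not even resolve the lower-degree curve $C_{13,657931}^{+}\colon Y^2=X^{25}+657931$, which is precisely why $n=p^2$ survives in part (4). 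The paper avoids the issue entirely: Table \ref{tab:dvals} gives $D_{26,657931}=\{3,7,13,241,657931\}$, with $5$ excluded, and the exclusion has a purely arithmetic reason recoverable inside your own framework: $g\equiv p^{25}\pmod{657931}$ is a $25$th power, so $\mathrm{ord}(g)=5$ would force $\mathrm{ord}(p)=125$, while $\mathrm{ord}(p)\mid \ell-1=657930=2\cdot3\cdot5\cdot7\cdot13\cdot241$, which is divisible by $5$ only once --- a contradiction. Without this refinement (or an actually certified computation on $H_{13,657931}^{\pm}$), parts (4) and (5) are not established.

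Two smaller points. Your dispatch of $p=\ell$ via Deligne's bound does not work as stated: the bound is perfectly compatible with $\ell\mid\tau_{2k}(\ell)$. The correct dispatch is that $\ell\mid\tau_{2k}(\ell)$ together with parity forbids $\tau_{2k}(\ell)=\pm\ell$, and the recurrence then gives $\ell^2\mid\tau_{2k}(\ell^a)$ for $a\ge2$, so $\tau_{2k}(\ell^a)\neq\pm\ell$. Finally, the paper can only solve $F_{46}(X,Y)=\pm283$ under GRH (Lemma \ref{thm3vals}), so your unconditional claim that the candidate $m=47$ for $\ell=283$ is eliminated should carry that caveat (as, arguably, should part (1) of Theorem \ref{bigprimes2} itself).
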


These theorems are obtained from the relation we can draw between inadmissible values of $\tau_{2k}(n)$ and the integer solutions of Thue equations and hyperelliptic curves. In order to utilize this connection, in Sections \ref{lucassequence} and \ref{2kdvals} we prove a series of lemmas that expand upon the results stated in \cite{balakrishnan2020variants} and also others that allow us to constrain possible values of $n$. Specifically, we show that for certain odd composite numbers, $n$ must be a power of a prime by using the properties of the sequence $\{\tau_{2k}(p^m)\}$. 

Furthermore, using the congruences in \cite{swinnerton1973}, we are able to restrict the possible exponents of the primes depending on the prime factors of the odd composite numbers. This theory allows us to reduce the problem of inadmissible coefficients to solving for integer points on specific curves. In particular, the question of whether a certain odd number can be a value of $\tau_{2k}(p^2)$ or $\tau_{2k}(p^4)$ can be reduced to the question of finding integer solutions on corresponding hyperelliptic curves. Similarly, other even exponents correspond to questions about integer points on certain Thue equations. In Section \ref{curves}, we relate the problem of solving $\tau_{2k}(n) = \alpha$ to finding integer points on specific Diophantine equations. We also discuss the explicit methods we use to find these integer solutions. Lastly, in Section \ref{tau} we rule out additional composite values of $\tau(n)$, and in Section \ref{weights} we establish results for other level one cusp forms. 

\section*{Acknowledgements}

The authors would like to thank Professor Ken Ono, Wei-Lun Tsai, Will Craig, and Badri Vishal Pandey for their guidance and suggestions. They would also like to thank the anonymous referee for helpful feedback. This research was generously supported by the National Science Foundation Grant DMS-2002265, National Security Agency Grant H98230-20-1-0012, the Templeton World Charity Foundation, and Thomas Jefferson Fund at the University of Virginia. 

\section{Lucas Sequences}\label{lucassequence}
We employ the work of Bilu, Hanrot, and Voutier in \cite{bilu1999existence} in order to better understand the coefficients of even weight newforms. 

\subsection{Lucas Numbers and Primitive Prime Divisors}

\begin{defn}
A \textit{Lucas pair} is a pair $(\alpha, \beta)$ of algebraic integers such that $\alpha+\beta$ and $\alpha\beta$ are non-zero coprime rational integers and $\alpha/\beta$ is not a root of unity. The associated \textit{Lucas numbers} $\{u_n(\alpha,\beta)\}=\{u_1=1,u_2=\alpha+\beta,\dots \}$ are the integers $$u_n(\alpha, \beta) \coloneqq \frac{\alpha^n-\beta^n}{\alpha-\beta}.$$
\end{defn}

Lucas numbers are classical objects and have been studied extensively in the past by many authors. Here we consider the existence of primitive prime divisors. 

\begin{defn}
Let $(\alpha, \beta)$ be a Lucas pair. A prime number $p$ is a \textit{primitive prime divisor} of $u_n(\alpha, \beta)$ if $p$ divides $u_n$ but does not divide $(\alpha-\beta)^2 u_1(\alpha, \beta) \cdots u_{n-1}(\alpha, \beta)$. Furthermore, if $u_n(\alpha,\beta)$ for $n>2$ does not have a primitive prime divisor, then it is called \textit{defective}.
\end{defn}

\begin{rem}
An integer $n$ is \textit{totally non-defective} if every Lucas pair $(\alpha, \beta)$ has a primitive divisor at $u_n(\alpha, \beta)$. 
\end{rem}

Bilu, Hanrot, and Voutier prove the following definitive theorem. 

\begin{thm}[Theorem 1.4 in \cite{bilu1999existence}]
Every Lucas number $u_n(\alpha, \beta)$, with $n > 30$, has a primitive prime divisor.
\end{thm}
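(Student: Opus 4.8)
The plan is to reduce the existence of a primitive divisor to a lower bound on the associated cyclotomic value, and then to obtain that bound by separating a ``real'' case from a ``complex'' case. Writing $\zeta_n$ for a primitive $n$-th root of unity, I would work with the homogenized cyclotomic value
\[
\Phi_n(\alpha,\beta) = \prod_{\substack{1 \le k \le n \\ \gcd(k,n)=1}} \bigl(\alpha - \zeta_n^k \beta\bigr),
\]
which is a rational integer satisfying the factorization $\alpha^n - \beta^n = \prod_{d \mid n} \Phi_d(\alpha,\beta)$. The first step is the standard arithmetic lemma that, for $n>2$, any prime dividing $\Phi_n(\alpha,\beta)$ that fails to be a primitive divisor of $u_n$ must equal the largest prime factor $p$ of $n$, and can then divide $\Phi_n(\alpha,\beta)$ to at most the first power. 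Hence $u_n$ is defective exactly when $|\Phi_n(\alpha,\beta)|$ is as small as $1$ or $p$, so it suffices to prove $|\Phi_n(\alpha,\beta)| > p$ for every Lucas pair whenever $n>30$.

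The second step is to bound $|\Phi_n(\alpha,\beta)|$ from below, and here the two regimes genuinely differ. When the discriminant $(\alpha-\beta)^2$ is positive, $\alpha$ and $\beta$ are real with $|\alpha|\neq|\beta|$, and elementary estimates comparing $|\Phi_n(\alpha,\beta)|$ with $\max(|\alpha|,|\beta|)^{\phi(n)}$ grow faster than any power of $n$, so they dominate $p \le n$ once $n$ is moderately large. The difficult case is when $\alpha$ and $\beta$ are complex conjugates (negative discriminant): then $|\alpha|=|\beta|$, the factors of $\alpha^n-\beta^n$ can nearly cancel in modulus, and the elementary bounds collapse. In this case I would write $\alpha/\beta = e^{2\pi i \theta}$ and reduce a lower bound for $|\alpha^n - \beta^n|$ to a lower bound for the linear form $|n\log(\alpha/\beta) - 2\pi i m|$, invoking sharp estimates for linear forms in two logarithms of Laurent--Mignotte--Nesterenko type. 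This yields $|\Phi_n(\alpha,\beta)| > p$ for all $n$ beyond an explicit but very large constant $N_0$.

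The third step handles the remaining range $30 < n \le N_0$ by a finite reduction. Parametrizing a Lucas pair by the integers $x = (\alpha+\beta)^2$ and $y = (\alpha-\beta)^2$, one checks that $\Phi_n(\alpha,\beta) = F_n(x,y)$ for an integral binary form $F_n$ that is homogeneous of degree $\phi(n)/2$; for instance $\Phi_3 = (3x+y)/4$ and $\Phi_5 = (x^2 - 3x(x-y)/4 + (x-y)^2/16)$, and for $\phi(n)\ge 6$ the form $F_n$ is of Thue type. Defectiveness then forces $F_n(x,y) = c$ with $|c| \le p$, i.e.\ a finite list of Thue equations whose integer solutions can be enumerated and tested, confirming that no $n>30$ produces a defective term.

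The main obstacle is the complex-conjugate case, where the near-cancellation of $\alpha^n$ and $\beta^n$ defeats all elementary size estimates and forces the deep transcendence input of sharp lower bounds for linear forms in logarithms; the secondary obstacle is that the resulting $N_0$ is enormous, so the reduction to Thue equations is a heavy computation, and it is precisely this computation that pins the threshold at exactly $n>30$.
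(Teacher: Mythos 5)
The paper contains no proof of this statement: it is imported verbatim as Theorem 1.4 of Bilu--Hanrot--Voutier \cite{bilu1999existence}, so the only meaningful comparison is with the proof in that reference. Your outline reproduces its architecture faithfully: the cyclotomic criterion (for $n$ in the relevant range, a non-primitive prime divisor of $\Phi_n(\alpha,\beta)$ must equal the largest prime factor $P(n)$ of $n$ and can divide $\Phi_n(\alpha,\beta)$ at most once, so defectiveness forces $|\Phi_n(\alpha,\beta)| \le P(n)$); the real/complex dichotomy, in which real pairs yield to elementary growth estimates while complex-conjugate pairs require Laurent--Mignotte--Nesterenko bounds on linear forms in two logarithms; and the computational reduction of the remaining finite range to Thue-type equations in $x=(\alpha+\beta)^2$ and $y=(\alpha-\beta)^2$ (your sample forms $\Phi_3=(3x+y)/4$ and $\Phi_5=(5x^2+10xy+y^2)/16$ are correct). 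Be clear, though, that what you have written is a roadmap rather than a proof: the explicit constant $N_0$, the verification that the linear-forms estimate actually beats $P(n)$ uniformly over all Lucas pairs for $n>N_0$, and the solution of the resulting families of Thue equations each constitute long technical arguments, and it is only the totality of that work, together with the classification of the defective examples occurring at $n\le 30$, that makes the threshold $30$ sharp.
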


Furthermore, their work, combined with the subsequent work of Abouzaid \cite{abouzaid2006nombres}, gives the \textit{complete classification} of defective Lucas numbers. This classification shows that each defective Lucas number either belongs to a finite list of sporadic examples or a finite list of parameterized infinite families. In our work, we specifically consider Lucas sequences arising from those quadratic integral polynomials $$F(X)=X^2 - AX + B = (X - \alpha)(X - \beta),$$ where $B = \alpha\beta=p^{2k-1}$ is an odd power of a prime, and $|A| = |\alpha+\beta| \leq 2 \sqrt{B} = 2p^{\frac{2k-1}{2}}$. 

\subsection{Lucas Sequence arising from Hecke eigenforms} \label{proplucas}

Throughout this paper we let $$f(z) \coloneqq q + \sum_{n=2}^{\infty}a_f(n)q^n \in S_{2k}(\Gamma_0(N)) \cap \mathbb{Z}[[q]]$$ be an even weight $2k \geq 4$ newform. For basic facts about newforms, the reader may consult \cite{apost0lbook, webofmod, atkin1970hecke}. We have the following theorem alongside a deep theorem of Deligne \cite{deligne1974conjecture, deligne1980conjecture} which gives us properties of the coefficients $\{a_f(p), a_f(p^2), \dots \}$. 

\begin{thm} \label{recurrence}
If $p\nmid N$ is prime then we have the following.
\begin{enumerate}
    \item If $m\geq 2$, then we have that $$a_f(p^m) = a_f(p)a_f(p^{m-1}) -p^{2k-1}a_f(p^{m-2}).$$
    \item If $\alpha_p$ and $\beta_p$ are roots of $F_p(x) = x^2 - a_f(p)x + p^{2k-1}$, then we have that $$a_f(p^m) = u_{m+1}(\alpha_p,\beta_p) =  \frac{\alpha_p^{m+1}-\beta_p^{m+1}}{\alpha_p-\beta_p}.$$ Moreover, we have that $\vert a_f(p)\vert \leq 2p^{\frac{2k-1}{2}}$ and $\alpha_p$ and $\beta_p$ are complex conjugates. 
\end{enumerate}
\end{thm}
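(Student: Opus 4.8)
The plan is to derive part (1) from the multiplicativity relations of the Hecke algebra, recognize part (2) as the associated second-order linear recurrence, and then invoke Deligne's bound to pin down the nature of the roots $\alpha_p,\beta_p$.

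First I would recall that a newform is a normalized simultaneous Hecke eigenform, so $T_n f = a_f(n) f$ for every $n$ coprime to $N$, with $a_f(1)=1$. For the recurrence in (1), I would invoke the classical operator identity satisfied by the Hecke operators at a good prime $p \nmid N$ in weight $2k$,
$$T_p T_{p^{m-1}} = T_{p^m} + p^{2k-1} T_{p^{m-2}} \qquad (m \geq 2),$$
which follows from the double-coset description of the $T_{p^j}$ (see \cite{apost0lbook, atkin1970hecke}). Applying both sides to $f$ and comparing the coefficients of $q$ on each side — equivalently, reading off eigenvalues, since $f = q + \cdots$ — yields $a_f(p) a_f(p^{m-1}) = a_f(p^m) + p^{2k-1} a_f(p^{m-2})$, which rearranges to the stated formula. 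One could instead phrase this as the assertion that the local Euler factor of $L(f,s)$ at $p$ equals $(1 - a_f(p) T + p^{2k-1} T^2)^{-1}$, expand it as a power series in $T$, and match coefficients to obtain the same relation.

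For part (2), I would observe that the recurrence from (1) is a second-order linear recurrence whose characteristic polynomial is exactly $F_p(x) = x^2 - a_f(p) x + p^{2k-1}$. Writing $\alpha_p, \beta_p$ for its roots, so that $\alpha_p + \beta_p = a_f(p)$ and $\alpha_p \beta_p = p^{2k-1}$, the Lucas sequence $\{u_{m+1}(\alpha_p,\beta_p)\}$ obeys the same recurrence. Since the two sequences agree at the base cases $a_f(p^0) = 1 = u_1$ and $a_f(p^1) = a_f(p) = \alpha_p + \beta_p = u_2$, a straightforward induction on $m$ gives $a_f(p^m) = u_{m+1}(\alpha_p,\beta_p)$ for all $m \geq 0$.

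It remains to justify the final assertions, and here lies the only genuinely deep input. The bound $\lvert a_f(p)\rvert \leq 2 p^{(2k-1)/2}$ is Deligne's bound, a consequence of his proof of the Weil conjectures \cite{deligne1974conjecture, deligne1980conjecture}; this is the step I would flag as the main obstacle, since everything preceding it is formal. Granting the bound, the discriminant $a_f(p)^2 - 4 p^{2k-1}$ is non-positive, so the roots of the real quadratic $F_p$ form a complex-conjugate pair; and because $\alpha_p \beta_p = p^{2k-1} > 0$, each root has absolute value $p^{(2k-1)/2}$, completing the proof.
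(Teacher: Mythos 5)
Your proposal is correct, and it coincides with the argument the paper implicitly relies on: the paper states this theorem without proof, as a classical consequence of Hecke theory (citing \cite{apost0lbook, webofmod, atkin1970hecke}) together with Deligne's bound \cite{deligne1974conjecture, deligne1980conjecture}, which is exactly the decomposition you give --- the operator identity $T_p T_{p^{m-1}} = T_{p^m} + p^{2k-1}T_{p^{m-2}}$ for part (1), identification with the Lucas sequence via matching initial conditions for part (2), and Deligne for the bound and the conjugate roots. One small point worth making explicit: since $2k-1$ is odd, $p^{2k-1}$ is never a perfect square, so the discriminant $a_f(p)^2 - 4p^{2k-1}$ is in fact \emph{strictly} negative (not merely non-positive); this guarantees $\alpha_p \neq \beta_p$, which is needed for the closed-form expression $\frac{\alpha_p^{m+1}-\beta_p^{m+1}}{\alpha_p-\beta_p}$ to make sense at all.
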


It is easy to see with some manipulation that the two statements above are equivalent. Next, we use properties of the Lucas sequence $\{a_f(p), a_f(p^2), \dots \}$ to get the following result.

\begin{lemma}\label{allodd} 
Let $f(z) = q + \Sigma_{n=2}^{\infty}a_f(n)q^n$ be an even weight $2k \geq 4$ newform of level $N$ with integer coefficients with trivial mod $2$ residual Galois representation.  If $p \nmid N$ is an odd prime, then the Lucas sequence $\{ a_f(p^m) \}$ has no odd defective values.
\end{lemma}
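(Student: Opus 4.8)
The plan is to combine a parity computation with the classification of defective Lucas numbers. The hypothesis that $f$ has trivial mod $2$ residual representation forces the trace $\alpha_p+\beta_p=a_f(p)$ to be even, while $\alpha_p\beta_p=p^{2k-1}$ is odd because $p$ is odd. Reducing the recurrence in Theorem~\ref{recurrence} modulo $2$ then gives $a_f(p^m)\equiv a_f(p^{m-2})\pmod 2$, and since $a_f(1)=1$ is odd while $a_f(p)$ is even, I conclude that $u_{m+1}=a_f(p^m)$ is odd precisely when its index $m+1$ is odd. Thus every odd value of the sequence occurs at an odd index, and it suffices to prove that $u_n$ has a primitive prime divisor for all odd $n\ge 3$.

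Next I would exploit the two features that pin down our pairs. By the Deligne bound $|a_f(p)|\le 2p^{(2k-1)/2}$ in Theorem~\ref{recurrence}, and since $2k-1$ is odd so that $2p^{(2k-1)/2}$ is irrational, the discriminant $(\alpha_p-\beta_p)^2=a_f(p)^2-4p^{2k-1}$ is strictly negative and $\alpha_p,\beta_p$ are genuine complex conjugates; moreover $\alpha_p\beta_p=p^{2k-1}$ is an odd prime power, and the Lucas-pair condition gives $\gcd(a_f(p),p^{2k-1})=1$, i.e. $p\nmid a_f(p)$. The theorem of Bilu--Hanrot--Voutier quoted above removes all $n>30$, and for odd $n$ with $5\le n\le 30$ the Bilu--Hanrot--Voutier and Abouzaid tables leave only finitely many sporadic defective pairs; I would verify by direct inspection that none of them simultaneously has a positive prime-power product $\alpha\beta$, an even trace $\alpha+\beta$, and negative discriminant. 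This leaves only the index $n=3$.

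The heart of the matter is $n=3$, where $u_3=a_f(p)^2-p^{2k-1}=a_f(p^2)$. Coprimality controls the non-primitive primes: if a prime $q$ divides both $u_3$ and $u_2=a_f(p)$ then $q\mid p^{2k-1}$, contradicting $\gcd(a_f(p),p^{2k-1})=1$; and if $q$ divides $u_3$ and $(\alpha_p-\beta_p)^2$ then $q\mid 3p^{2k-1}$, forcing $q=3$. Since $3\mid u_3$ implies $3\mid(\alpha_p-\beta_p)^2$ automatically, the prime $3$ is never primitive, so $u_3$ fails to have a primitive divisor exactly when $a_f(p^2)=\pm 3^{a}$. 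Ruling this out is the main obstacle. A congruence modulo $3$ shows $3\nmid a_f(p^2)$ when $p=3$ or $p\equiv 2\pmod 3$, collapsing those cases to $a_f(p^2)=\pm1$, which is excluded by the known resolution of the $\pm1$ problem; for $p\equiv 1\pmod 3$ I would appeal to the finer congruences available for these forms---for $\Delta$, Ramanujan's congruence modulo $9$ already gives $a_f(p)^2\not\equiv p^{2k-1}\pmod 9$ and kills every exponent $a\ge 2$, while the small cases $a_f(p^2)=\pm1,\pm3$ are handled by earlier results---to conclude that $a_f(p^2)$ is never a power of $3$. Making this exclusion work uniformly across the relevant forms is the delicate step on which the lemma rests.
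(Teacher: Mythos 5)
Your skeleton matches the paper's: reduce by parity (the trivial mod-$2$ representation forces $a_f(p)$ to be even, so odd values occur only at even $m$, i.e., at odd Lucas index), then run the Bilu--Hanrot--Voutier/Abouzaid classification of defective Lucas numbers against the special shape of these pairs ($\alpha\beta = p^{2k-1}$, even trace, negative discriminant). This is exactly how the paper proceeds, organized through the sporadic examples and the parametrized families in the tables of \cite{balakrishnan2020variants}. Your parity computation, your elimination of the sporadic cases, and your reduction of the index-$3$ case to $a_f(p^2) = \pm 3^a$ are all sound and parallel the paper. One caution on the sporadic step: the three criteria you propose to inspect --- prime-power product, even trace, negative discriminant --- are not by themselves sufficient, since the defective pair $(\alpha+\beta, \alpha\beta) = (2,11)$, for which $u_5 = 5$ divides $(\alpha-\beta)^2 = -40$, passes all three. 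What actually eliminates such pairs is that $\alpha\beta$ must equal $p^{2k-1}$ with $2k-1 \geq 3$; this is the check the paper makes when it observes that the sporadic cases occur only at weight $4$ with $p = 2$.

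The genuine gap is the case you flag yourself: $n = 3$ with $u_3 = a_f(p)^2 - p^{2k-1} = \pm 3^a$, $a \geq 1$, $p \equiv 1 \pmod 3$. The lemma is stated for an arbitrary newform of any level $N$ satisfying the mod-$2$ hypothesis, and for such a form there is no Ramanujan-type congruence modulo $9$; your proposed exit exists only for $\Delta$ and the level-one forms treated by Swinnerton-Dyer, and your closing sentence (``making this exclusion work uniformly across the relevant forms is the delicate step on which the lemma rests'') concedes that the argument is incomplete precisely where the lemma needs it. The paper closes this case without any congruences: the classification in \cite{balakrishnan2020variants} records, for the infinite family realizing defective $u_3 = \pm 3^r$ (row two of their Table 2), constraints on the parameters of any pair lying in that family, and the paper argues that those constraints (the index is forced to be $m = 2$, and $3 \nmid a_f(p)$ is required) are incompatible with a Lucas pair coming from such a newform, giving a contradiction. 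In short, the missing ingredient is the finer structural data attached to the parametrized families in the classification tables, which applies at the full level of generality of the lemma, rather than form-by-form congruence arithmetic, which does not.
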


\begin{proof}
From the classification of the defective cases mentioned in \cite{balakrishnan2020variants},  we know the only sporadic cases that occur are those for weight $4$. They occur at $(a_f(2), a_f(4)) = (\pm 3, 1)$ and $(a_f(2), a_f(32)) = (\pm 5, \pm 85)$, neither of which are part of the sequence $\{ a_f(p^m) \}$. 

Otherwise, the cases in which defects can occur are parameterized by infinite families which appear in \cite[Table 2]{balakrishnan2020variants}. Let $m$ be a positive even integer, so that $a_f(p^m)$ is odd.  Since $f$ is of weight greater than or equal to four, we only have to consider rows two, three, and six of \cite[Table 2]{balakrishnan2020variants}. Since $f$ has a trivial mod $2$ Galois representation, $a_f(p)$ must be even for odd $p \nmid N$. This implies that $a_f(p) \nmid a_f(p^m)$, thus rows three and six cannot hold. This leaves us with just row two. 

Now we consider row two. If $a_f(p^m) = \pm \varepsilon \cdot 3^r$ is defective where $\varepsilon = \pm 1$, then we know $m = 2$. However, the constraints on the parameters require that  $3 \nmid a_f(p)$, which is a contradiction.
\end{proof}

The following lemma describes the types of odd numbers which must appear in Lucas sequence of the form $\{1,a_f(p), a_f(p^2)\dots \}$. 


 \begin{lemma} \label{inductivestep}
 Let $c$ be an odd integer such that for all pairs $(\alpha, \beta)$ where $c = \alpha \beta$ we have either $|a_f(n)| \neq \alpha$ or $|a_f(n)| \neq \beta$ for any $n\in \mathbb{Z}$. If we have that $a_f(n_0) = c$, then $n_0 = p^d$ where $p$ is prime. 
\end{lemma}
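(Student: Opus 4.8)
The plan is to argue by contraposition using the multiplicativity of the Fourier coefficients of the newform $f$. Recall that a normalized Hecke eigenform has multiplicative coefficients: whenever $\gcd(m_1,m_2)=1$ we have $a_f(m_1 m_2) = a_f(m_1)\,a_f(m_2)$. I would suppose, toward a contradiction, that $a_f(n_0) = c$ but that $n_0$ is \emph{not} a prime power.

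If $n_0$ is not a prime power, then it has at least two distinct prime divisors, so I can split off a coprime nontrivial factorization $n_0 = m_1 m_2$ with $\gcd(m_1,m_2) = 1$ and $1 < m_1, m_2 < n_0$ (for instance, take $m_1 = p^{e}$ to be the full $p$-part of $n_0$ for one prime $p \mid n_0$, and $m_2 = n_0/m_1$). Multiplicativity then gives
\[
c = a_f(n_0) = a_f(m_1)\, a_f(m_2).
\]

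Next I would pass to absolute values and set $\alpha := |a_f(m_1)|$ and $\beta := |a_f(m_2)|$, so that $\alpha\beta = |c|$. Since $c$ is odd it is in particular nonzero, so neither $a_f(m_1)$ nor $a_f(m_2)$ vanishes, and both $\alpha$ and $\beta$ are odd positive integers whose product is $|c|$; thus $(\alpha,\beta)$ is one of the factorizations of $c$ ranged over in the hypothesis. But by construction $\alpha = |a_f(m_1)|$ is realized at $n = m_1$ and $\beta = |a_f(m_2)|$ is realized at $n = m_2$. Hence for this single pair we have both $|a_f(n)| = \alpha$ for some $n$ and $|a_f(n)| = \beta$ for some $n$, directly negating the hypothesis that for every such pair at least one of $\alpha,\beta$ fails to be realized as $|a_f(n)|$. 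This contradiction forces $n_0$ to be a prime power, proving the lemma.

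The step I expect to require the most care is the bookkeeping of signs and the precise meaning of "factorization" in the hypothesis: since $|a_f(n)| = \alpha$ forces $\alpha \geq 0$ while $c$ itself may be negative, the natural reading is that the pairs $(\alpha,\beta)$ are taken in absolute value with $\alpha\beta = |c|$. Working with $|a_f(m_1)|$ and $|a_f(m_2)|$ and invoking the oddness of $c$ (to guarantee the factors are odd and nonzero) resolves this cleanly. Everything else follows immediately from multiplicativity.
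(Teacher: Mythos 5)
Your proof is correct and follows essentially the same route as the paper's: both split a hypothetical non-prime-power $n_0$ into a nontrivial coprime factorization, apply Hecke multiplicativity to get $c = a_f(m_1)a_f(m_2)$, and contradict the hypothesis of the lemma. Your extra care with absolute values and signs is a reasonable clarification of the lemma's somewhat loosely stated hypothesis, but it does not change the substance of the argument.
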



\begin{proof}
Suppose $n_0$ satisfies $a_f(n_0)=c$. If $n_0$ is a composite number with at least two distinct prime factors, then we can write $n_0 = nm$ where $n$ and $m$ are coprime. By Hecke multiplicativity, we know $c = a_f(nm) = a_f(n) a_f(m)$, which implies that $(a_f(n),a_f(m))$ is a pair $(\alpha,\beta)$ such that $\alpha\beta = c$. However, since there exists no pair of coprime integers $n,m$ that satisfy this, it follows that $n_0$ is a power of a prime.
\end{proof}

\subsection{Limiting the potential values of $n$} \label{limitdvals}

Following the notation given in \cite{balakrishnan2020variants}, let \[\mathcal{U}_f \coloneqq \{1\} \cup \{4: \text{ if } a_f(2) = \pm 3, 2k = 4, N \text{ odd}\}.\] Moreover, let $m_\ell(\alpha_p, \beta_p)$ be given by \[m_\ell(\alpha_p, \beta_p) \coloneqq \min\{n \geq 1: a_f(p^{n-1}) \equiv 0 \Mod\ell\}.\] 

\begin{thm}[Theorem 3.2 in \cite{balakrishnan2020variants}] \label{bigtheoremono}
Suppose that the mod $2$ residual Galois representation for $f(z)$ is trivial. If $|a_f(n)| = \ell^m$, with $m \in \mathbb{Z}^+$ and $\ell$ is an odd prime, then $n = m_0 p^{d - 1}$ where $m_0 \in \mathcal{U}_f$, $p \nmid N$ is prime, and $d | \ell(\ell^2 - 1)$ is an odd prime. Moreover, $|a_f(n)| = \ell^m$ for finitely many (if any) $n$. 
\end{thm}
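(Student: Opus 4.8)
The plan is to combine Hecke multiplicativity with the primitive-prime-divisor machinery of Section~\ref{lucassequence} and the parity forced by the trivial mod~$2$ representation. First I would factor $n=\prod_i p_i^{e_i}$; by multiplicativity $a_f(n)=\prod_i a_f(p_i^{e_i})$, so each $|a_f(p_i^{e_i})|$ is itself a power of $\ell$ (possibly $\ell^0=1$), with exponents summing to $m$. The trivial mod~$2$ representation gives, exactly as in the proof of Lemma~\ref{allodd}, that $a_f(p)$ is even for every odd prime $p\nmid N$; reducing the recurrence of Theorem~\ref{recurrence} modulo $2$ then shows that for odd $p$ the value $a_f(p^{e})$ is odd precisely when $e$ is even, while $a_f(2^e)$ is always odd. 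Since $\ell^m$ is odd, every odd prime dividing $n$ must occur to an even power $e_i=d_i-1$, so that the index $d_i$ of the associated Lucas number is odd.

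Next, for each prime $p_i$ with $|a_f(p_i^{e_i})|=\ell^{m_i}>1$, I would view $a_f(p_i^{e_i})=u_{d_i}(\alpha_{p_i},\beta_{p_i})$ as a Lucas number all of whose prime divisors equal $\ell$. Being odd, it is non-defective by Lemma~\ref{allodd}, hence has a primitive prime divisor, which can only be $\ell$. The standard rank-of-apparition bound for primitive divisors (namely $d_i\mid \ell-\left(\frac{D}{\ell}\right)$ when $\ell$ is unramified in the relevant quadratic order, and $\ell\mid d_i$ otherwise, where $D=a_f(p_i)^2-4p_i^{2k-1}$) then yields $d_i\mid \ell(\ell^2-1)$. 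To see $d_i$ is prime, I would exploit the divisibility $u_a\mid u_{d_i}$ valid whenever $a\mid d_i$: a proper divisor $a>1$ would force $u_a=\pm\ell^{m'}$, and either $\ell\mid u_a$ with $a<d_i$ (contradicting that $\ell$ is primitive at index $d_i$) or $u_a=\pm1$ (an odd defective value, impossible by Lemma~\ref{allodd}). Thus each such $d_i$ is an odd prime dividing $\ell(\ell^2-1)$.

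It remains to classify the unit factors and collapse to a single prime. The factors with $|a_f(p_i^{e_i})|=1$ satisfy $u_{e_i+1}=\pm1$; since for odd $p_i$ this would again be an odd defective value (excluded by Lemma~\ref{allodd}), the complete classification of defective Lucas numbers leaves only the sporadic weight-$4$ case $a_f(2)=\pm3$, where $a_f(4)=a_f(2)^2-2^{3}=1$, so their product is exactly some $m_0\in\mathcal{U}_f$. To pass to a single prime I would invoke Lemma~\ref{inductivestep} with $c=\pm\ell^m$, and the finiteness of the set of admissible $n$ follows from the growth estimate $|u_{d}(\alpha_p,\beta_p)|\gtrsim p^{(2k-1)(d-1)/2}$ (using $|\alpha_p|=|\beta_p|=p^{(2k-1)/2}$ from Theorem~\ref{recurrence}) together with the ineffective theorem of Murty, Murty, and Shorey~\cite{murty1987odd} cited above, which already guarantees finitely many $n$ with $a_f(n)$ equal to a fixed odd number.

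The main obstacle is verifying the hypothesis of Lemma~\ref{inductivestep}, i.e.\ the collapse to a single odd prime: parity and primitive divisors do not by themselves forbid two coprime prime powers $p_1^{e_1}, p_2^{e_2}$ each realizing a power of $\ell$, since the two Lucas sequences are independent and the rank constraints $d_1,d_2\mid\ell(\ell^2-1)$ are mutually consistent. I expect to dispose of this through the growth bound above—each nontrivial factor forces $p_i^{(2k-1)(d_i-1)/2}\lesssim \ell^m$, so only finitely many configurations survive, and realizing a prescribed power of $\ell$ simultaneously at several coprime arguments becomes a system of Thue/hyperelliptic conditions. Making this exclusion sharp, rather than merely finite, is the delicate point, and it is precisely here that the input of Section~\ref{lucassequence} via Lemma~\ref{allodd} is indispensable.
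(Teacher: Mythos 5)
The first thing to note is that the paper does not prove this statement at all: it is imported verbatim, with citation, as Theorem 3.2 of \cite{balakrishnan2020variants}, so there is no in-paper argument to compare against. Judged on its own terms, the local (per prime power) part of your proposal is sound and is exactly the toolkit that source uses: evenness of $a_f(p)$ for odd $p\nmid N$ from the trivial mod $2$ representation, the parity pattern of $a_f(p^e)$ from the recurrence in Theorem \ref{recurrence}, non-defectivity from Lemma \ref{allodd}, the observation that a value $\pm\ell^{c}$ forces $\ell$ to be the primitive prime divisor, so that $d=e+1$ equals the rank of apparition and divides $\ell(\ell^2-1)$, and primality of $d$ via $u_a\mid u_d$ for $a\mid d$ together with the exclusion of $u_a=\pm 1$.

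The genuine gap is the one you concede in your final paragraph, and your proposed repair does not close it. The theorem is a structural claim about \emph{every} $n$: it forbids $n$ from having two coprime factors $p_1^{e_1}$, $p_2^{e_2}$ with $a_f(p_i^{e_i})=\pm\ell^{c_i}$ and $c_i\geq 1$. Since the Lucas sequences $\{a_f(p_1^m)\}$ and $\{a_f(p_2^m)\}$ are independent, the constraints $d_1,d_2\mid\ell(\ell^2-1)$ are, as you yourself observe, mutually consistent; and neither finiteness (via \cite{murty1987odd} or growth bounds) nor ``only finitely many configurations survive'' excludes such an $n$ --- a single surviving configuration already falsifies the claimed form $n=m_0p^{d-1}$. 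Your appeal to Lemma \ref{inductivestep} is moreover circular: for $c=\pm\ell^m$ its hypothesis requires that in every factorization $\ell^m=\ell^a\cdot\ell^{m-a}$ at least one factor is not a value of $|a_f|$, which is precisely the collapse you are trying to establish. Two smaller defects: Lemma \ref{allodd} applies only to odd primes $p\nmid N$, so it cannot be invoked for the $2$-part of $n$; your assertion that $a_f(2^e)$ is ``always odd'' is false unless $a_f(2)$ is odd, and you treat only $|a_f(2^e)|=1$ there, although the conclusion of the theorem permits $p=2$ (that is, $n=2^{d-1}$ with $\ell\mid a_f(2^{d-1})$), a case that needs the defective classification at $p=2$ directly rather than Lemma \ref{allodd}. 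It is telling that the present paper never relies on the collapse as a theorem: in Corollary \ref{bigcorollary} and Lemma \ref{lemdvalupdated} the reduction to a single prime power is imposed as a separate hypothesis (the conditions of Lemma \ref{inductivestep}), and only the divisibility $d\mid\ell(\ell^2-1)$ is quoted from this theorem. A complete proof of the statement as quoted must take the multiplicative collapse from the cited source; it does not follow from the machinery you (or this paper) develop.
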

 
\begin{cor} \label{bigcorollary}
Suppose $f(z) = q + \Sigma_{n=2}^{\infty}a_f(n)q^n$ is an even  weight $2k \geq 4$ newform of level $N$. If a Fourier coefficient of this newform is of the form $a_f(n) = \ell_1^{r_1} \dots \ell_s^{r_s}$ for some $n$ where the $\ell_i$ are odd primes such that $\ell_i \nmid N$ and the conditions of Lemma \ref{inductivestep} are satisfied, then $n = p^{d-1}$ where $p \nmid N$ and \[d \in \bigcup_i ~ \{m: m \text{ odd prime, } m|\ell_i(\ell^2_i - 1)\}.\] Moreover, $|a_f(n)| = \ell_1^{r_1} \dots \ell_s^{r_s}$ for finitely many $n$. 
\end{cor}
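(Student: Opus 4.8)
The plan is to reduce the multiplicative statement to the prime-power case handled by Theorem~\ref{bigtheoremono} and then to replace the single prime $\ell$ there by a primitive prime divisor of the relevant Lucas term. First I would apply Lemma~\ref{inductivestep} directly: since $a_f(n) = \ell_1^{r_1}\cdots\ell_s^{r_s}$ is odd and, by hypothesis, admits no factorization $\alpha\beta$ with both $\alpha$ and $\beta$ occurring as Fourier coefficients in absolute value, the lemma forces $n = p^{e}$ for a single prime $p$. Writing $e = d-1$, it remains to control the prime $p$ and the exponent $d$.

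Next I would show $p \nmid N$. If instead $p \mid N$, then Hecke multiplicativity at ramified primes gives $a_f(p^{d-1}) = a_f(p)^{d-1}$, and for a level-$N$ newform of weight $2k$ the value $a_f(p)$ is either $0$ or $\pm p^{k-1}$; since $2k \ge 4$ this is $0$ or divisible by $p$, so $a_f(n)$ would be zero or divisible by $p$. But $a_f(n) = \prod_i \ell_i^{r_i}$ is nonzero and coprime to $N$ (each $\ell_i \nmid N$), a contradiction. Hence $p \nmid N$, so Theorem~\ref{recurrence} applies and $a_f(p^{d-1}) = u_d(\alpha_p,\beta_p)$ is a genuine Lucas number, with $\alpha_p\beta_p = p^{2k-1}$ and $|\alpha_p+\beta_p| \le 2p^{(2k-1)/2}$.

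The core step is to extract the constraints on $d$. Because $a_f(n)=u_d$ is odd, Lemma~\ref{allodd} (valid under the standing trivial mod $2$ residual hypothesis) shows $u_d$ is non-defective, so it has a primitive prime divisor; as every prime dividing $u_d$ lies in $\{\ell_1,\dots,\ell_s\}$, this primitive divisor is some $\ell_i$. Its rank of apparition in the Lucas sequence is exactly $d$, and a primitive divisor is coprime to the discriminant $D=(\alpha_p-\beta_p)^2$ and to $\alpha_p\beta_p$, so the classical law of apparition gives $d \mid \ell_i - \left(\tfrac{D}{\ell_i}\right)$, whence $d \mid \ell_i^2-1$ and in particular $d \mid \ell_i(\ell_i^2-1)$. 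The parity statement is immediate: odd Lucas values occur only at even exponents $p^{d-1}$ by Lemma~\ref{allodd}, so $d$ is odd. To see that $d$ is in fact an odd \emph{prime} I would run the descent from the proof of Theorem~\ref{bigtheoremono}: a proper divisor $e \mid d$ with $1 < e < d$ yields $u_e \mid u_d$ with $u_e$ again odd and hence (Lemma~\ref{allodd}) carrying its own primitive prime divisor, and comparing the primitive parts of the intermediate terms against the fixed factorization $\prod_i \ell_i^{r_i}$ forces $e$ to be trivial. This exponent-determination step is the main obstacle, because, unlike the single-prime setting of Theorem~\ref{bigtheoremono}, the term $u_d$ may a priori carry several of the $\ell_i$, so one must argue carefully that no nontrivial intermediate index can reproduce a factor compatible with the hypotheses of Lemma~\ref{inductivestep}.

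Finally, finiteness follows from growth: for each of the finitely many admissible odd primes $d$, Theorem~\ref{recurrence} gives $|u_d(\alpha_p,\beta_p)| \asymp p^{(2k-1)(d-1)/2} \to \infty$ as $p \to \infty$, so only finitely many primes $p$ can satisfy $|a_f(p^{d-1})| = \prod_i \ell_i^{r_i}$; summing over the finite set of exponents $d$ yields finitely many $n$, exactly as in the last clause of Theorem~\ref{bigtheoremono}.
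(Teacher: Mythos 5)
Your skeleton up through the apparition step is essentially the paper's: Lemma \ref{inductivestep} gives $n=p^{d-1}$, oddness plus Lemma \ref{allodd} gives non-defectivity, and the primitive prime divisor of $u_d$ (necessarily some $\ell_i$) has rank of apparition exactly $d$ dividing $\ell_i-\left(\frac{D}{\ell_i}\right)$, which is the constraint the paper gets by citing Theorem \ref{bigtheoremono}; your ramified-prime argument for $p\nmid N$ is a worthwhile supplement the paper omits (though the parity of the exponent comes from the Hecke recurrence mod $2$, not from Lemma \ref{allodd}). But the step you yourself flag as ``the main obstacle'' --- that $d$ is \emph{prime} --- is a genuine gap, and the difficulty you sense is real: primitive-divisor bookkeeping alone cannot close it (for instance $u_3=\pm 3$ and $u_9=\pm 15$ is perfectly consistent with every term having a primitive divisor). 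The paper closes it by a different mechanism: if $(r+1)\mid (t+1)$ properly with $1<r+1<t+1$, the classical divisibility gives $a_f(p^r)\mid a_f(p^t)=\pm c$, and $a_f(p^r)$ is itself a Fourier coefficient. Non-defectivity forces $|a_f(p^r)|>1$, while $|a_f(p^r)|=c$ is impossible since then $u_{t+1}/u_{r+1}=\pm 1$ and $u_{t+1}$ would have no primitive prime divisor; hence $c=|a_f(p^r)|\cdot\bigl(c/|a_f(p^r)|\bigr)$ is a nontrivial factorization in which one factor is an attained coefficient, exactly what the hypotheses of Lemma \ref{inductivestep} are invoked to forbid. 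In short, the same hypothesis that forces $n$ to be a prime power also kills composite exponents; no comparison of ``primitive parts'' across several $\ell_i$ is needed.

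Your finiteness argument is also invalid as stated. Since $\alpha_p,\beta_p$ are complex conjugates of modulus $p^{(2k-1)/2}$, writing $\alpha_p=p^{(2k-1)/2}e^{i\theta_p}$ gives $u_d=\frac{\sin(d\theta_p)}{\sin\theta_p}\,p^{(2k-1)(d-1)/2}$, so only the upper bound in your claimed $\asymp$ holds; the trigonometric factor has no elementary lower bound, and $|u_d|$ need not tend to infinity with $p$ for any a priori reason. Concretely, for $d=3$ one has $u_3=a_f(p)^2-p^{2k-1}$, a difference of two comparable huge quantities, and ruling out that this difference stays bounded for infinitely many $p$ is precisely the Diophantine content of the claim --- it is not a growth statement. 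The paper instead passes through Lemmas \ref{lemcurve} and \ref{lemthueq}: $|a_f(p^{d-1})|=c$ forces an integer point on a curve which, because $2k\geq 4$, has positive genus, and Siegel's theorem then gives finiteness. Some appeal of this type (Siegel, or Thue for the higher exponents) is unavoidable here, and replacing it by the asymptotic $|u_d|\asymp p^{(2k-1)(d-1)/2}$ is a wrong step, not a shortcut.
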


\begin{proof}
Since $a_f(n)$ satisfies the conditions of Lemma \ref{inductivestep}, we have that $n = p^t$ where $p$ is prime. The $\ell_i$ are odd primes, so $a_f(p^t)$ must be odd. By Lemma \ref{allodd}, $a_f(p^t)$ is a non-defective term in the Lucas sequence $\{1, a_f(p), a_f(p^2), \dots \}.$ By non-defectivity, for at least one prime $\ell_i \in \{\ell_1,\ell_2, \dots \ell_s\}$, the term $a_f(p^t)$ must be the first element of the Lucas sequence such that $\ell_i \vert a_f(p^t)$. Therefore, the value of $t + 1$ must coincide with the value of $m_{\ell_i}(\alpha_p, \beta_p)$ for at least one $\ell_i$. By Theorem \ref{bigtheoremono}, the values of $m_{\ell_i}(\alpha_p, \beta_p)$ for each $\ell_i$ are given by the condition $m_\ell(\alpha_p, \beta_p) | \ell(\ell^2 - 1)$. 

By the Hecke recurrence and the triviality of the Galois representation mod 2, we have that $a_f(p^{t})$ is odd if and only if $t$ is even. It is a classical fact that if $(r+1)|(t+1)$, then $a_f(p^{r})|a_f(p^{t})$. Because $a_f(n)$ satisfies the conditions of Lemma \ref{inductivestep}, there can be no $a_f(p^{r})$ that satisfies this divisibility condition. Thus, $t+1$ must be an odd prime. From the Hecke recurrence relation (see Theorem \ref{recurrence}) and the fact that the weight is $2k \geq 4$, we know that the corresponding curve has positive genus. Therefore, Siegel's Theorem asserts that $|a_f(p^{t})| = c$ has only finitely many integer points.

\end{proof}

\begin{rem}
Let $a_f(n)$ be the Fourier coefficients of a weight $2k = 4$ level $N$ newform that satisfy the conditions given in Lemma \ref{inductivestep}. If $(a_f(2),  a_f(32)) \neq (\pm 5, \pm 85)$ and $a_f(n) = \alpha$ where $\alpha$ is an odd integer for which all the odd prime factors $\ell_i \mid \alpha $ are such that $\ell_i \nmid N$, then $n = m_0 p^{d-1}$ where $d$ and $p$ satisfies the same conditions as before  and $m_0 \in \mathcal{U}_f$. This follows directly from Theorem \ref{bigtheoremono} and Corollary \ref{bigcorollary}. If $(a_f(2), a_f(32)) = (\pm 5, \pm 85)$, then Corollary \ref{bigcorollary} still holds for $\alpha$ where $85 \nmid \alpha$. 
\end{rem}

\section{Limiting potential values of $d$ using congruences} \label{2kdvals}

In the specific case of level one cusp forms that form a one dimensional vector space over $\mathbb{C}$, we utilize the work of Swinnerton-Dyer on congruences. It is well known that all of these forms have residually reducible mod 2 Galois representation \cite[Theorem 1.3]{ono20052}. We want to know whether the Fourier coefficients of these cusp forms attain certain odd values and if so where those values are supported. Applying Corollary \ref{bigcorollary}, if the $n$th Fourier coefficient $\tau_{2k}(n)$ of a weight $2k = 12, 16, 18, 20, 22,$ or $26$ cusp form is odd and satisfies the conditions of Lemma \ref{inductivestep}, then we have that $n = p^{d-1}$ for some odd prime $d$. 

The congruences outlined in the work of Swinnerton-Dyer\cite{swinnerton1973} on the Ramanujan $\tau$-function and more generally for level one Hecke operators, can give us a stronger statement than Corollary \ref{bigcorollary} for finding whether $\tau_{2k}(n) = c$, where $c$ is an odd integer such that $\ell \mid c$ for prime $\ell$ in Table \ref{tab:dvals} and $c$ satisfies the conditions of Lemma \ref{inductivestep}. In particular, it helps by giving a stronger restraint on the possible $d$ values corresponding to some small primes. 

We wish to define a set $D_{2k,\ell_i}$ which contains all possible $d$ values corresponding to the prime $\ell_i$ for weight $2k$ level 1 cusp forms. For specific primes where $D_{2k,\ell_i}$ depends on the congruences and changes depending on weight, we have classified the corresponding $D_{2k,\ell_i}$ in Table \ref{tab:dvals}.
For all other primes $\ell_i$, we define $D_{2k,\ell_i} \coloneqq \{m: m \text{ odd prime, } m|\ell_i(\ell^2_i - 1)\}$, where $D_{2k,\ell_i}$ does not depend on the weight.  

\begingroup
\setlength{\tabcolsep}{5pt} 
\renewcommand{\arraystretch}{1.6}
\begin{center}
\begin{table}[!htb]
\begin{tabular}{|c|c|c||c|c|c|} 
 \hline
 $\ell$ & Weight ($2k$) & $D_{2k, \ell}$  & $\ell$ & Weight ($2k$) & $D_{2k, \ell}$  \\
 \hline \hline
 $5$  & 12,16,18,20,22,26 &  $\{5\}$  &  $131$  & 22 & $\{5, 13, 131\}$\\
 \hline
 $7$ & 12,18,20,26 & $\{7\}$  & $283$ & 20  & $\{3, 47, 283\}$\\
 \hline 
 $7$  & 16, 22 & $\{3,7\}$ & $593$ & 22 & $\{37, 593\}$ \\
 \hline 
 $11$  & 16,20,26 & $\{5, 11\}$ & $617$  & 20 & $\{7, 11, 617\}$\\
 \hline 
 $11$  & 18  & $\{11\}$ & $691$ & 12 & $\{3, 5, 23, 691\}$\\
 \hline 
 $13$  & 18,20,22 & $ \{3,13\}$ & $3617$  & 16 & $\{113, 3617\}$\\
 \hline
 $17$  & 22,26 & $ \{17\}$ & $43867$  & 18 & $\{3, 2437, 43867\}$\\
 \hline 
 $19$  & 26  &  $\{3, 19\}$ & $657931$  & 26 & $\{3, 7, 13, 241, 657931\}$\\
 \hline
\end{tabular}
\medskip
\caption{\textit{List of special cases for $d$ values using congruences.}} 
\label{tab:dvals}
\end{table}
\end{center}
\endgroup
\vspace{-12mm}
For level one cusp forms, we can use the following lemma to limit the possible $d$ values we must consider for composite odd numbers. 

\begin{lemma}\label{lemdvalupdated}
Let us suppose that $\tau_{2k}(p^{d-1}) = \ell_1^{r_1} \dots \ell_s^{r_s}$ where $p$ is prime, $\ell_i$ are odd primes and the conditions of Lemma \ref{inductivestep} are satisfied. We define \[M_{2k} \coloneqq \max_i \min_{i, m_i > 0} D_{2k,\ell_i}\] where $m_i \in D_{2k,\ell_i}$ and \[D_{2k,(\ell_1^{r_1} \dots \ell_s^{r_s})}^{*} \coloneqq \bigcup_{i} ~ (D_{2k,\ell_i} \cap \mathbb{Z}_{\geq M_{2k}}).\] Then $d \in D_{2k,\ell_1^{r_1} \dots \ell_s^{r_s}}^{*}.$

\end{lemma}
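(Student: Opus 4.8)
The plan is to combine the strengthened form of Corollary \ref{bigcorollary} with the elementary divisibility theory of the rank of apparition in the Lucas sequence $\{u_n\}$, where $u_n := a_f(p^{n-1})$ and here $a_f = \tau_{2k}$. First I would invoke Corollary \ref{bigcorollary} together with the congruence refinement that defines $D_{2k,\ell_i}$ (Table \ref{tab:dvals}): since $\tau_{2k}(p^{d-1})$ satisfies the hypotheses of Lemma \ref{inductivestep} and is odd, the index $d$ is an odd prime, and there is at least one prime $\ell_i \mid c$ for which $a_f(p^{d-1})$ is the \emph{first} term of the sequence divisible by $\ell_i$; that is, $d = m_{\ell_i}(\alpha_p,\beta_p) \in D_{2k,\ell_i}$.

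The heart of the argument is to upgrade this to information about \emph{every} prime factor $\ell_j$. Because $c = a_f(p^{d-1}) = u_d$ is divisible by each $\ell_j$, the rank of apparition $m_{\ell_j} = \min\{n : \ell_j \mid u_n\}$ satisfies $m_{\ell_j} \mid d$: this is the standard fact that, for a Lucas sequence with $\ell_j \nmid \alpha_p\beta_p$, one has $\ell_j \mid u_n$ if and only if $m_{\ell_j} \mid n$, which is consistent with the divisibility $u_a \mid u_b$ for $a \mid b$ already used in the proof of Corollary \ref{bigcorollary}. Since $d$ is prime and $m_{\ell_j} \neq 1$ (as $u_1 = 1$ is not divisible by the odd prime $\ell_j$), we are forced to have $m_{\ell_j} = d$ for \emph{all} $j$. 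By the definition of $D_{2k,\ell_j}$ as the set of admissible rank-of-apparition values for $\ell_j$ in weight $2k$, this yields $d \in D_{2k,\ell_j}$ for every $j$, and in particular $d \geq \min D_{2k,\ell_j}$ for every $j$.

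Taking the maximum over $j$ of these lower bounds gives exactly $d \geq \max_j \min D_{2k,\ell_j} = M_{2k}$. Combining this with the membership $d \in D_{2k,\ell_i}$ from the first step (indeed $d \in D_{2k,\ell_j}$ for all $j$), we conclude
\[
 d \in \bigcup_i \bigl(D_{2k,\ell_i} \cap \mathbb{Z}_{\geq M_{2k}}\bigr) = D^{*}_{2k,\ell_1^{r_1}\cdots\ell_s^{r_s}},
\]
which is the desired statement.

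I expect the main obstacle to be the legitimacy of the divisibility $m_{\ell_j}\mid d$ precisely when some $\ell_j$ equals the base prime $p$, since then $\ell_j \mid \alpha_p\beta_p = p^{2k-1}$ and the clean ``$\ell_j\mid u_n \iff m_{\ell_j}\mid n$'' dichotomy can fail. The mod-$p$ recursion $a_f(p^m)\equiv a_f(p)^m \pmod p$ shows that $p \mid u_d$ can only occur for non-ordinary $p$ (those with $p \mid a_f(p)$), and one checks that the primitive prime divisor singled out in the first step is never equal to $p$ (a primitive divisor cannot divide $(\alpha_p-\beta_p)^2 \equiv a_f(p)^2 \pmod p$, whereas for ordinary $p$ one has $p \nmid u_n$ for all $n$). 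I would therefore dispose of the remaining $\ell_j = p$ possibility separately, either via the primitivity and defectivity bookkeeping of Lemma \ref{allodd} or by treating the finitely many relevant non-ordinary primes directly in the target cases of Table \ref{tab:dvals}.
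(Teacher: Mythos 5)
Your proof is correct and follows the same skeleton as the paper's (two-sentence) proof: use the primitive-divisor argument of Corollary \ref{bigcorollary} to get that $d$ is an odd prime lying in $D_{2k,\ell_i}$ for at least one $i$, and then extract a lower bound on $d$ from \emph{every} prime factor $\ell_j$. Where you differ is in how that lower bound is justified, and your version is the more rigorous one. The paper asserts only that $\ell_i \mid \tau_{2k}(p^{d-1})$ forces $d \geq m_{\ell_i}(\alpha_p,\beta_p)$ for each $i$ and concludes $d \geq M_{2k}$; taken literally, that inference also needs $m_{\ell_i}(\alpha_p,\beta_p) \geq \min D_{2k,\ell_i}$, which is false in general because the genuine rank of apparition need not be an odd prime nor lie in $D_{2k,\ell_i}$ (for $2k=12$, $\ell=5$ and any $p \equiv 4 \pmod{5}$, e.g.\ $p = 19$, one has $m_5(\alpha_p,\beta_p) = 2$ while $\min D_{12,5} = 5$). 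Your route repairs exactly this: from $\ell_j \mid \tau_{2k}(p^{d-1})$, the dichotomy $\ell_j \mid u_n \iff m_{\ell_j} \mid n$ together with primality of $d$ and $m_{\ell_j} \neq 1$ forces $m_{\ell_j}(\alpha_p,\beta_p) = d$, hence $d \in D_{2k,\ell_j}$ for every $j$, which is what actually yields $d \geq M_{2k}$; this is presumably what the authors meant by their appeal to ``non-defectivity.'' Finally, the degenerate case $\ell_j = p$, where the dichotomy genuinely fails (it can occur only for non-ordinary $p$, i.e.\ $p \mid \tau_{2k}(p)$, in which case $p$ divides $\tau_{2k}(p^m)$ for all $m \geq 1$ and imposes no constraint on $d$), is a real loose end: you flag it and sketch plausible repairs without closing it, whereas the paper's proof does not acknowledge it at all. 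So that is a shared blind spot of both arguments rather than a defect of yours relative to the paper's.
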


\begin{proof}
The proof is similar to that of Corollary \ref{bigcorollary}. For each $\ell_i$, we have that $\ell_i \mid \tau_{2k}(p^{d-1})$. By non-defectivity, it follows that $d\geq m_{\ell_i}$ for all $i$, thus it follows that $d\geq M_{2k}$. 
 
\end{proof}

\begin{rem}
In the cases when no $\ell_i$ are in Table \ref{tab:dvals} , then the values given here are identical to those given in Corollary \ref{bigcorollary}. 
\end{rem}

\section{Hyperelliptic curves and Thue Equations}\label{curves}

The problem of determining whether $a_f(p^{d-1}) = c$ for all possible $(p,d)$ can be reduced  to a problem of determining integer points on a set of finitely many algebraic curves. We know that the elements in the Lucas sequence $\{a_f(p), a_f(p^2), \dots \}$ for $p \nmid N$ satisfy the recurrence relation \[ a_f(p^m) = a_f(p)a_f(p^{m-1}) - p^{2k-1}a_f(p^{m-2}).\] Using this recurrence relation, we can find curves that we want to analyze in order to determine if $a_f(p^{d-1}) = c$ is true.

\subsection{Hyperelliptic curves} \label{barros}

For $d-1 = 2,4$, determining whether  $a_f(p^{d-1}) = c$ holds becomes a problem of finding points on corresponding hyperelliptic curves. The following lemma comes from the recursive relation of the Lucas sequence mentioned in the previous section. 

\begin{lemma}\label{lemcurve}
For coefficients $a_f(n)$ of a weight $2k$ level $N$ newform, we have the following for $p$ prime such that $p\nmid  N$. 
\begin{enumerate}
    \item If $a_f(p^2) = \pm c$, then $(p,a_f(p)^2)$ must be a point on the hyperelliptic curve
    
\begin{equation}
    C_{k,c}^{\pm} : Y^2 = X^{2k-1} \pm c.
\end{equation}

\item If $a_f(p^4) = \pm c$, then $(p, 2a_f(p)^2-3p^{2k-1})$ must be a point on the hyperelliptic curve 

\begin{equation}
   H_{k,c}^{\pm} : Y^2 = 5X^{2(2k-1)} \pm 4c.
\end{equation}

\end{enumerate}
\end{lemma}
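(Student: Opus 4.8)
The plan is to derive both hyperelliptic curves directly from the Hecke recurrence relation stated in Theorem \ref{recurrence}, expressing $a_f(p^2)$ and $a_f(p^4)$ as polynomials in the two quantities $a_f(p)$ and $p^{2k-1}$, and then completing the square to exhibit the asserted Diophantine relations. First I would handle part (1). Writing $B \coloneqq p^{2k-1}$ and applying the recurrence with $m=2$ to the base cases $a_f(1) = 1$ and $a_f(p)$, we obtain
\begin{equation*}
a_f(p^2) = a_f(p)a_f(p) - p^{2k-1}a_f(1) = a_f(p)^2 - B.
\end{equation*}
Setting $a_f(p^2) = \pm c$ and solving gives $a_f(p)^2 = B \pm c = p^{2k-1} \pm c$. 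Thus if we put $Y = a_f(p)$ and $X = p$, the point $(p, a_f(p)^2)$ — or rather, reading the curve $Y^2 = X^{2k-1} \pm c$ with $Y = a_f(p)$ — satisfies the equation. The phrasing in the statement records the point as $(p, a_f(p)^2)$, which is exactly the pair $(X, Y^2)$ on $C_{k,c}^{\pm}$, so the claim follows immediately.

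For part (2) I would iterate the recurrence two more steps. Using $a_f(p^3) = a_f(p)a_f(p^2) - B\, a_f(p)$ and $a_f(p^4) = a_f(p)a_f(p^3) - B\, a_f(p^2)$, and substituting the expression $a_f(p^2) = a_f(p)^2 - B$ from part (1), one computes that $a_f(p^4)$ is a polynomial in $a_f(p)^2$ and $B$; explicitly it works out to
\begin{equation*}
a_f(p^4) = a_f(p)^4 - 3B\,a_f(p)^2 + B^2.
\end{equation*}
Setting this equal to $\pm c$ and treating it as a quadratic in $a_f(p)^2$, the discriminant is $9B^2 - 4(B^2 \mp c) = 5B^2 \pm 4c$. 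Completing the square, $a_f(p)^2 = \tfrac{3B \pm \sqrt{5B^2 \pm 4c}}{2}$, so the quantity $2a_f(p)^2 - 3B$ squares to $5B^2 \pm 4c = 5p^{2(2k-1)} \pm 4c$. Hence with $X = p$ and $Y = 2a_f(p)^2 - 3p^{2k-1}$ the pair $(p,\, 2a_f(p)^2 - 3p^{2k-1})$ lies on $H_{k,c}^{\pm}: Y^2 = 5X^{2(2k-1)} \pm 4c$, which is precisely the asserted curve.

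The calculations are entirely routine once the recurrence is unwound; the only point requiring care is bookkeeping of the signs when $a_f(p^4) = +c$ versus $-c$, since the sign of $c$ propagates into the constant term of the quadratic and then into the $\pm 4c$ inside the radical. The main (minor) obstacle is therefore verifying that the two sign conventions in the statement match the two branches of the square root consistently, and confirming that the integrality of $a_f(p)$ and $p$ guarantees these are genuine integer points rather than merely algebraic solutions — a point that matters because the subsequent sections invoke Barros' algorithm and Siegel's theorem, which require integer points on curves of positive genus. Since $\deg = 2k-1 \geq 3$ for weight $2k \geq 4$ (and $2(2k-1) \geq 6$ in the second case), both curves have positive genus, so this reduction is exactly what is needed to apply the integer-point methods developed later.
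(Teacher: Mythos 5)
Your proposal is correct and follows exactly the route the paper intends: the paper gives no explicit proof, simply remarking that the lemma ``comes from the recursive relation of the Lucas sequence,'' and your computation of $a_f(p^2) = a_f(p)^2 - p^{2k-1}$ and $a_f(p^4) = a_f(p)^4 - 3p^{2k-1}a_f(p)^2 + p^{2(2k-1)}$, followed by completing the square to get $\bigl(2a_f(p)^2 - 3p^{2k-1}\bigr)^2 = 5p^{2(2k-1)} \pm 4c$, is precisely that omitted derivation with the signs tracked correctly. You also rightly note (and resolve) the paper's slight abuse of notation in recording the point on $C_{k,c}^{\pm}$ as $(p, a_f(p)^2)$ rather than $(p, a_f(p))$.
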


In order to find the integer points on the hyperelliptic curves, we implement the algorithm outlined by Barros in \cite{barros}. In his thesis, Barros states theorems that give us integer solutions to equations of the form $x^2 + D = Cy^n$ with nonzero integers $C,D$. This method involves producing a finite set of Thue equations and solving for integer points on the Thue equations, which then gives us potential $x$ coordinates.

First, since $D$ is a nonzero integer, there exist nonzero integers $d$ and $q$ where $d$ is square free, $q\geq 1$ and $D = dq^2$. In our case, we will be considering two different situations: when $\sqrt{-d}\in \mathbb{Q}$ and $\sqrt{-d} \not \in \mathbb{Q}$. Note that $\sqrt{-d}\in \mathbb{Q}$ only occurs when $d = -1$, or when $D$ is a negative square. In this case, it follows that our field of interest $K = \mathbb{Q}(\sqrt{-d})$ is just $\mathbb{Q}$ and the ring of integers associated with $K = \mathbb{Q}$ is just $\mathbb{Z}$. Now let us assume that $R = \underset{\text{prime }p \mid 2q}{\prod} p$ denotes the product of all distinct primes that divide $2q$. Then, by \cite[Theorem~2.2]{barros}, we have that for any integer solution $(x,y)$ (where $y\neq 0$) of the equation $x^2 + D = Cy^n$, there exist corresponding natural numbers $a,b,c_1,c_2$ that have the following properties 
\begin{enumerate}
    \item  $c_1c_2 = C$
    \item $a \mid R^n$ and $b\mid R^n$
    \item for prime $p$, we have $p\mid a $ if and only if $p \mid b$
    \item $ab$ is a perfect $n$th power.
\end{enumerate}
and give us the following corresponding Thue equation \[2q = bc_2U^n-ac_1V^n.\]
Then a solution  $(U,V)$ of the above Thue equation gives us that \[x = \frac{1}{2}(bc_2U^n + ac_1V^n).\] Therefore, we can use this property to find all possible values of $x$, by first finding all possible combinations of $(a,b,c_1,c_2)$ and then generating the corresponding Thue equations. We can find solutions $(U,V)$ of said Thue equations and use them to get explicit equations that give us potential values of $x$, or potential integer solutions of the equation $x^2 + D = Cy^n$.

When $D$ is not a negative square, it follows that $\sqrt{-d}\not \in \mathbb{Q}$. Let $K = \mathbb{Q}(\sqrt{-d})$ and $\mathcal{O}_K$ be the ring of integers associated with $K$. Let $\sigma\in \Gal(K/\mathbb{Q})$ such that $\sigma(\sqrt{-d}) = -\sqrt{-d}$. For elements $\alpha \in K$, let $\bar{\alpha}$ denote the conjugate of $\alpha$, which is defined by $\bar{\alpha} = \sigma(\alpha)$. 
For any integer solution $(x,y)$  (where we have that $y\neq 0$) of $x^2 + D = Cy^n$, \cite[Theorem~2.1]{barros} states that there exists a finite set $\Gamma$ with pairs $(\gamma_{+},\gamma_{-})$ of elements in $K$ that gives us the Thue equation\[2q = \frac{1}{\sqrt{-d}}(\gamma_{+}(A+B\omega)^n - \gamma_{-}(A+B\Bar{\omega})^n),\] where $\{1,\omega\}$ is the integral basis of $\mathcal{O}_K$ and $\bar{\omega}$ is the conjugate of $\omega$. A solution $(A,B)$ of this Thue equation gives us an explicit formula for $x:$\[x = \frac{1}{2}(\gamma_{+}(A+B\omega)^n + \gamma_{-}(A+B\Bar{\omega})^n).\]We can use this property to find all possible integer solutions $(x,y)$ of $x^2 + D = Cy^n$ by finding all the elements of $\Gamma$ and then generating a finite set of Thue equations. The solutions $(A,B)$ of these Thue equations can then be used to plug into the explicit formula that will produce potential values of $x$.

In order to construct $\Gamma$, we will construct potential  ideals $\mathfrak{a}_+$ of the form \[ \mathfrak{a}_+ = \underset{\mathfrak{p} \mid \langle 2DC\rangle}{\prod} \mathfrak{p} ^{\kappa_\mathfrak{p}},\] where $\mathfrak{p}$ are prime ideals and $0\leq \kappa_{\mathfrak{p}} < n$ such that \begin{enumerate}
    \item $0\leq \kappa_\mathfrak{p}\leq \kappa_1$ where $\kappa_1$ is the largest  integer such that $\mathfrak{p}^{\kappa_1} \mid \langle 2qC\sqrt{-d} \rangle $ holds,
    \item min $\{\kappa_\mathfrak{p}, \kappa_{\bar{\mathfrak{p}}} \} \leq \kappa_2 $ where $\bar{\mathfrak{p}}$ and $\kappa_2$ is the largest  integer such that $\mathfrak{p}^{\kappa_2} \mid \langle 2q\sqrt{-d}\rangle  $ holds,
    \item $\kappa_\mathfrak{p} + \kappa_{\bar{\mathfrak{p}}} - \kappa_3 \equiv 0  \text{ (mod } n)$ where $\kappa_3$ is the largest integer such that $\mathfrak{p}^{\kappa_3} \mid \langle C\rangle $.
\end{enumerate}
We find all possible ideals $\mathfrak{a}_+$ by first finding all prime ideals $\mathfrak{p}$ that divide $\langle 2DC \rangle $ and and their corresponding possible  $\kappa_{\mathfrak{p}}$. We then take the combinations of these primes ideals to get all the possible products of the form desired. Next, we find the integral ideals $\mathfrak{g}_1,...\mathfrak{g}_h$ of $\mathcal{O}_K$ (where $h$ is the order of the class group of $\mathcal{O}_K$). We combine the integral ideals and the potential $\mathfrak{a}_+$ to find cases where $\mathfrak{a}_+ \mathfrak{g}^{-n}_i$ is a principal ideal. In the cases where  $\mathfrak{a}_+ \mathfrak{g}^{-n}_i$ is indeed a principal ideal, we find a generator of the ideal, $\gamma'$. From this, we can construct  the set $\Gamma'$, which is a collection of pairs $(\gamma', \bar{\gamma'})$ of the generators $\gamma'$ and their conjugate. 

Now we will construct the set $\Gamma$ using the set $\Gamma'$ and the representatives of the units of $K$ modulo $n$th powers, $U_K/U_K^n$. For each pair $(\gamma', \bar{\gamma'})\in \Gamma'$ and representative $u\in U_K/U_K^n$, we can construct a new pair $(\gamma_u,\bar{\gamma_u})$. Let $\Gamma$ be the collection of such pairs $(\gamma, \bar{\gamma}) = (\gamma_u,\bar{\gamma_u})$ for some $u \in U_K/U_K^n$ and some $(\gamma',\bar{\gamma'})\in \Gamma'$. We can use this set $\Gamma$ to find the corresponding Thue equations, and then use integer solutions $(A,B)$ of the Thue equations to get explicit equations that gives us potential values of $x$. 

\subsection{Thue Equations}
For $d-1>4$, we consider integer solutions of Thue equations. Consider the homogeneous polynomials generated by the following power series in $T$:
\begin{equation}\label{genfunction}
\frac{1}{1-\sqrt{Y}T+XT^2}=\sum_{m=0}^{\infty}F_m(X,Y)\cdot T^{m}=1+\sqrt{Y}\cdot T+(Y-X)T^2+\cdots.
\end{equation}  
We have an explicit formula for polynomials $F_{2m}(X,Y)$ of degree $m$:\begin{equation}\label{ThueEqn}
F_{2m}(X,Y)=\prod_{k=1}^m \left(Y-4X \cos^2\left(\frac{\pi k}{2m+1}\right)\right).
\end{equation}
The following lemma relates these polynomials to coefficients of newforms. 

\begin{lemma}\label{lemthueq}
 If $a_f(p^{d-1}) = \pm c$, then $(p^{2k-1},a_f(p)^2)$ must be an integer solution of the Thue equation $F_{d-1}(X,Y) = \pm c$.
\end{lemma}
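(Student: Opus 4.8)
The plan is to recognize the generating function in \eqref{genfunction} as nothing but the ordinary generating series of the Lucas sequence $\{a_f(p^m)\}_{m \ge 0}$ attached to $f$ and $p$, under the substitution $X = p^{2k-1}$ and $\sqrt{Y} = a_f(p)$. Concretely, I would start from part (2) of Theorem \ref{recurrence}, which gives $a_f(p^m) = (\alpha_p^{m+1} - \beta_p^{m+1})/(\alpha_p - \beta_p)$ with $\alpha_p + \beta_p = a_f(p)$ and $\alpha_p \beta_p = p^{2k-1}$, and then compute $\sum_{m \ge 0} a_f(p^m) T^m$ directly.

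First I would expand each geometric series and use partial fractions:
\[
\sum_{m \ge 0} \frac{\alpha_p^{m+1} - \beta_p^{m+1}}{\alpha_p - \beta_p}\, T^m = \frac{1}{\alpha_p - \beta_p}\left( \frac{\alpha_p}{1 - \alpha_p T} - \frac{\beta_p}{1 - \beta_p T} \right) = \frac{1}{(1 - \alpha_p T)(1 - \beta_p T)}.
\]
Multiplying out the denominator rewrites this as $\frac{1}{1 - a_f(p) T + p^{2k-1} T^2}$. Comparing with \eqref{genfunction}, the substitution $X \mapsto p^{2k-1}$ and $\sqrt{Y} \mapsto a_f(p)$ (equivalently $Y \mapsto a_f(p)^2$) matches the two power series coefficient by coefficient, so that $F_m(p^{2k-1}, a_f(p)^2) = a_f(p^m)$ for every $m \ge 0$.

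The remaining step is to specialize to $m = d - 1$ and to confirm that $F_{d-1}$ is a genuine polynomial at this point. Since $d$ is an odd prime (as established in Corollary \ref{bigcorollary}), $d - 1$ is even, so $F_{d-1}$ is one of the even-index polynomials given explicitly by \eqref{ThueEqn}; in particular only even powers of $\sqrt{Y}$ occur, and $F_{d-1}$ is an honest polynomial in $X$ and $Y$. Plugging in yields $F_{d-1}(p^{2k-1}, a_f(p)^2) = a_f(p^{d-1}) = \pm c$, and since $p^{2k-1}$ and $a_f(p)^2$ are integers, the pair $(p^{2k-1}, a_f(p)^2)$ is an integer solution of the Thue equation $F_{d-1}(X,Y) = \pm c$.

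The only real subtlety — and the point I would be most careful about — is the appearance of $\sqrt{Y}$ in the generating function. For odd indices $F_m$ involves odd powers of $\sqrt{Y}$ and is not a polynomial in $Y$, so the reduction to a Thue equation genuinely relies on the parity $d - 1 \equiv 0 \pmod 2$; the evenness also makes the choice of sign in $\sqrt{Y} = \pm a_f(p)$ immaterial, which is precisely what makes the substitution $Y = a_f(p)^2$ well defined.
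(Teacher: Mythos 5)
Your proof is correct and is essentially the paper's own argument: the paper disposes of this lemma in one line by noting that the integers $F_m(p^{2k-1},a_f(p)^2)$ and the Lucas sequence $\{a_f(p^m)\}$ satisfy the same recurrence, which is exactly the identity $F_m(p^{2k-1},a_f(p)^2)=a_f(p^m)$ that you establish via partial fractions, since matching the rational generating function \eqref{genfunction} is equivalent to matching the recurrence together with its initial conditions. Your added care about the parity of $d-1$ --- so that $F_{d-1}$ is an honest polynomial in $Y$ by \eqref{ThueEqn} and the sign ambiguity in $\sqrt{Y}=\pm a_f(p)$ is immaterial --- is a genuine subtlety that the paper leaves implicit.
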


The lemma above comes from the fact that the sequence of integers  $F_m(p^{2k-1},a_f(p)^2)$  and the Lucas sequence $\{a_f(p^m)\}$ satisfy the same recurrence relation. Using ThueSolver in \texttt{SageMath}, we can find integer solutions for these Thue equations independent of GRH up to $(d-1) \leq 30$. However, Thue equations of higher degree are more difficult to solve. Therefore, we use different methods to simplify the problem of checking whether $F_{\ell-1}(X,Y) = \pm \ell$ has any integer solutions for odd prime $\ell$. 

\subsection{The Method of Continued Fraction} \label{contfrac}
When considering the cases $F_{\ell-1}(X,Y) = \pm \ell $ when $\ell$ is an odd prime, it is equivalent to think about the equation  \[\widehat{F}_{\ell}(X,Y)= \prod_{k=1}^{\frac{\ell-1}{2}} \left(Y-2X \cos\left(\frac{2\pi k}{\ell}\right)\right) = \pm \ell.\] From the explicit formula (\ref{ThueEqn}) for the Thue equations $F_{2m}(X,Y)$, we can easily see that for odd primes $\ell$ we have $F_{\ell-1}(X,Y) = \widehat{F}_{\ell}(X,Y-2X)$. Hence we can alternatively solve for integer points on $\widehat{F}_\ell(X,Y) = \pm \ell$ and break the problem down into three different cases. 
For large $\vert X \vert $, we know there is no solution from by \cite[Corollary~2.5]{bilu1999existence}.

\begin{defn}
Let us define $P'(n)$ in the following way. 
\begin{equation*}
P'(n) = 
\begin{cases}
2 & \textit{if $n = 2^k\cdot 3$ for some $k$}  \\
\text{greatest prime divisor of $n$}  & \textit{otherwise}.
\end{cases}
\end{equation*}
\end{defn}

\begin{cor}[Corollary 2.5 in \cite{bilu1999existence}]
An integer $n$ is totally non-defective if the equation $$\widehat{F}_n(X, Y) \in \{ \pm 1, \pm P'(n) \}$$ has no solutions $(X, Y) \in \mathbb{Z}^2$ with $|X| > e^8$.  
\end{cor}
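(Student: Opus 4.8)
The plan is to prove the contrapositive: starting from a Lucas pair $(\alpha,\beta)$ for which $u_n(\alpha,\beta)$ is defective, I will exhibit an integer point on $\widehat F_n(X,Y)\in\{\pm1,\pm P'(n)\}$ and show that it either satisfies $|X|>e^8$ or lies in a region already settled by the classification. The first step is a purely formal identity. Writing $\zeta=e^{2\pi i/n}$ and pairing each primitive $n$-th root of unity $\zeta^k$ with its conjugate $\zeta^{-k}$, one has
\[
\left(Y-2X\cos\tfrac{2\pi k}{n}\right)\Big|_{X=\alpha\beta,\,Y=\alpha^2+\beta^2}=(\alpha-\zeta^k\beta)(\alpha-\zeta^{-k}\beta),
\]
so that $\widehat F_n(\alpha\beta,\,\alpha^2+\beta^2)=\Phi_n(\alpha,\beta)$, where $\Phi_n(\alpha,\beta)=\prod_{\gcd(k,n)=1}(\alpha-\zeta^k\beta)$ is the homogenized $n$-th cyclotomic polynomial (for a prime $n=\ell$ this is precisely the product to $(\ell-1)/2$ appearing above, since every $1\le k\le \ell-1$ is coprime to $\ell$). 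Since $\alpha+\beta$ and $\alpha\beta$ are rational integers, $X=\alpha\beta$ and $Y=\alpha^2+\beta^2=(\alpha+\beta)^2-2\alpha\beta$ are integers, so any value of $\Phi_n(\alpha,\beta)$ is realized at an integer point of $\widehat F_n$.

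The heart of the argument is to show that defectiveness forces $\Phi_n(\alpha,\beta)\in\{\pm1,\pm P'(n)\}$. For this I would analyze the prime divisors of $\Phi_n(\alpha,\beta)$ through the factorization $u_n(\alpha,\beta)=\prod_{e\mid n,\,e>1}\Phi_e(\alpha,\beta)$. The classical dichotomy (Birkhoff--Vandiver, as sharpened in \cite{bilu1999existence}) is that a prime $p\mid\Phi_n(\alpha,\beta)$ is either a primitive prime divisor of $u_n$ (when $p\nmid n$) or equals the largest prime divisor $P$ of $n$, in which case $P\,\|\,\Phi_n(\alpha,\beta)$. Hence if $u_n$ has no primitive divisor, then every prime dividing $\Phi_n(\alpha,\beta)$ must divide $n$, only $P$ can occur, and it occurs at most once; thus $|\Phi_n(\alpha,\beta)|\in\{1,P\}$. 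The refinement replacing $P$ by $P'(n)$ accounts for the exceptional behaviour at $n=2^k\cdot 3$, where the $2$-adic rather than the $3$-adic valuation controls the defect.

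Combining the two steps, a defective pair yields an integer solution $(X,Y)=(\alpha\beta,\,\alpha^2+\beta^2)$ of $\widehat F_n(X,Y)\in\{\pm1,\pm P'(n)\}$. If $|X|=|\alpha\beta|>e^8$, this contradicts the hypothesis directly. If instead $|X|\le e^8$, then the pair has absolutely bounded parameters, and I would appeal to Theorem 1.4 of \cite{bilu1999existence} (which already gives the claim outright for $n>30$) together with the complete classification of defective terms in \cite{bilu1999existence} and \cite{abouzaid2006nombres} to rule out a defect at index $n$ in the remaining bounded window when $n\le 30$. Either way no defective pair exists, so $n$ is totally non-defective. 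The main obstacle is the second step: pinning down the exact prime-power structure of $\Phi_n(\alpha,\beta)$ --- in particular that the exceptional prime occurs only to the first power and that the correct exceptional prime is $P'(n)$ rather than the naive largest prime factor --- and certifying that the threshold $e^8$ is chosen so that the entire range $|X|\le e^8$ is genuinely covered by the existing classification rather than leaving sporadic defective pairs unaccounted for.
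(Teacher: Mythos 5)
The first thing to say is that the paper itself offers no proof of this statement: it is quoted verbatim as Corollary 2.5 of \cite{bilu1999existence} and used as a black box, so your attempt can only be judged against the mathematics, not against an argument in the paper. Your first step is fine --- the identity $\widehat F_n(\alpha\beta,\alpha^2+\beta^2)=\Phi_n(\alpha,\beta)$ is correct and is exactly the link the paper exploits. The trouble starts with your second step. The ``cyclotomic criterion'' as you state it (defectiveness forces every prime divisor of $\Phi_n(\alpha,\beta)$ to equal the largest prime $P$ of $n$ and to divide it exactly once, whence $\Phi_n(\alpha,\beta)\in\{\pm1,\pm P'(n)\}$) is false without exceptions at small $n$. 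For the Fibonacci pair ($\alpha+\beta=1$, $\alpha\beta=-1$) the terms $u_6=8$ and $u_{12}=144$ are defective, yet $\Phi_6(\alpha,\beta)=\alpha^2-\alpha\beta+\beta^2=4$ and $\Phi_{12}(\alpha,\beta)=6$: the offending prime is $2$ rather than $P(n)=3$, it divides $\Phi_6$ to the \emph{second} power, and neither value lies in $\{\pm1,\pm P'(n)\}=\{\pm1,\pm2\}$. The criterion in \cite{bilu1999existence} carries explicit exceptions at precisely such indices, and they cannot be waved away here, since the corollary is asserted for every integer $n$.

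The second gap is fatal to your endgame. In the region $|X|\le e^8$, $n\le 30$, you propose to ``rule out a defect at index $n$'' by appealing to the classification of defective Lucas numbers; but the classification does the opposite --- it \emph{exhibits} defective pairs at various indices $n\le 30$, all with tiny $|\alpha\beta|$. Concretely, for the Lucas pair with $\alpha+\beta=1$, $\alpha\beta=2$ one computes $u_{30}=-24475=-5^2\cdot 11\cdot 89$ with $5\mid u_6$, $11\mid u_{10}$, $89\mid u_{15}$, so $u_{30}$ is defective, and correspondingly $(X,Y)=(2,-3)$ satisfies $\widehat F_{30}(X,Y)=-5=-P'(30)$. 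This solution has $|X|\le e^8$, hence is perfectly compatible with the hypothesis of the corollary, so your contrapositive can never reach a contradiction at $n=30$; the classification supplies similar small-parameter examples at other indices below $30$. For the statement as quoted to survive at such $n$, the equation would need \emph{some other} solution with $|X|>e^8$, rendering the hypothesis vacuous; that is automatic in the Pell-like cases $\varphi(n)\le 4$ (e.g.\ $n=5,6,8,10,12$), but you neither prove it nor is there any reason to expect it for the genuine Thue cases such as $n=7,13,18,30$. Finally, note that what this paper actually uses (Section \ref{contfrac}, proof of Lemma \ref{listvals}) is the \emph{reverse} implication: for $n>30$, which is totally non-defective by Theorem 1.4 of \cite{bilu1999existence}, there are no solutions with $|X|>e^8$. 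That direction rests on the linear-forms-in-logarithms bound of \cite{bilu1999existence}, equivalently on the construction manufacturing a defective Lehmer pair from any solution with large $|X|$ (take $\alpha,\beta$ to be the roots of $z^2-\sqrt{Y+2X}\,z+X$); neither ingredient appears in your proposal, so even the version of the statement that the paper relies on is not established by your argument.
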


Thus, we have that $\widehat{F}_{\ell}(X,Y) = \pm \ell$ has no integer solutions $(X,Y)$ when $\vert X \vert > e^8$. For midsize $\vert X \vert$, this equation becomes a problem regarding continued fraction expansions. This leaves a finite set of small $\vert X \vert $ values, which we can check. Definitions of ``large" and ``midsize" depend on the value of $p$ being considered. We know that for primes $\ell \geq 31$, we consider $\vert X \vert > e^8$ to be ``large" values of $\vert X \vert$ as seen in \cite{bilu1999existence}.  The lower bound of the midsize definition depends on $\ell$, as given in \cite[Lemma~1.1]{tzanakis1989practical}. For the cases we discuss, we can show that this lower bound is always $3$ by using the arithmetic of the cyclotomic field. For the midsize region, dividing out $\widehat{F}_{\ell}(X,Y) = \pm \ell$ by $X^{\frac{\ell-1}{2}}$ gives us \[\prod_{k=1}^{\frac{\ell-1}{2}} \left(\frac{Y}{X}-2 \cos\left(\frac{2\pi k}{\ell}\right)\right) = \frac{\pm \ell}{X^{\frac{\ell-1}{2}}} \approx 0.\] In this case, since $\pm \ell/X^{\frac{\ell-1}{2}} \approx 0$,  this equation  holds when $\frac{Y}{X} \approx 2\cos(\frac{2\pi k}{\ell})$ for some $k$. We can find values $(X,Y)$ such that this holds by finding  the continued fraction expansion of $2\cos(\frac{2\pi k}{\ell})$. For each possible $k$, we can find a sequence of pairs $\{(q_{ki},p_{ki})\}$  where $\frac{p_{ki}}{q_{ki}}$ is the $i$th term of the sequence of continued fraction expansion of $2\cos(\frac{2\pi k}{\ell})$. We can check whether $\widehat{F}_{\ell}(q_{ki},p_{ki}) = \pm \ell$ holds for all pairs  $(q_{ki},p_{ki})$ such that $q_{ki}<e^8$.

\section{The $\tau$-function} \label{tau}
The following table neatly summarizes what is previously known about the inadmissible values of $\tau(n)$. We aim to determine what happens at the bold cells of the table. As we can see, the values that are bold in the table are composite. 
\begingroup
\setlength{\tabcolsep}{5pt} 
\renewcommand{\arraystretch}{1.6}
\begin{table}[!htb]
\begin{center}
    \begin{tabular}{|c|c|c|c|c|c|c|c|c|c|}
        \hline
        $\pm 1$ & $\pm 3$ & $\pm 5$ & $\pm 7$ & $\textbf{9}$ & $-11_*$ & $\pm 13$ & $\textbf{15}$ & $\pm 17$ & $-19$   \\
        \hline
        $\textbf{21}$ & $\pm 23$ & $\textbf{25}$ & $\textbf{27}$ & $-29_*$ & $-31_*$ & $\textbf{33}$ & $\textbf{35}$ & $\pm 37$ & $\textbf{39}$  \\ 
        \hline
        $-41_*$ & $\pm 43_*$ & $\textbf{45}$ & $\pm 47_*$ & $\textbf{49}$ & $\textbf{51}$ & $\pm 53_*$ & $\textbf{55}$ & $\textbf{57}$ & $-59_*$  \\
        \hline 
        $-61_*$ & $\textbf{63}$ & $\textbf{65}$ & $\pm 67_*$ & $\textbf{69}$ & $-71_*$ & $\pm 73_*$ & $\textbf{75}$ & $\textbf{77}$ & $-79_*$  \\
        \hline 
        $\textbf{81}$ & $\pm 83_*$ & $\textbf{85}$ & $\textbf{87}$ & $-89_*$ & $\textbf{91}$ & $\textbf{93}$ & $\textbf{95}$ & $\pm 97_*$ & $\textbf{99}$ \\
        \hline
    \end{tabular}
    \medskip
    \caption{\textit{The previously known inadmissible values of $\tau(n)$, where the bold cells denote the plus or minus values that are still unknown. (note. GRH assumption indicated by $_*$)}}
\end{center}
\end{table}
\endgroup

\vspace{-7mm}
In order to determine whether these bold values are inadmissible values of the $\tau$-function, we proceed by using the work done in Section \ref{2kdvals} to determine which curves we need to evaluate. We can use the methods described in previous sections to determine whether the corresponding equations have integer solutions, which help us rule out values of $\tau(n)$. 

\begin{lemma}
For $n>1$, we have that $\tau(n) \neq \pm 15$. 
\end{lemma}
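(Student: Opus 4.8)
The plan is to reduce the claim to a finite hyperelliptic computation using the machinery of Sections \ref{lucassequence}--\ref{curves}, applied to $\Delta$, the weight $2k=12$ level-one newform (which has the trivial mod $2$ residual Galois representation required by the lemmas). First I would set $c = 15 = 3\cdot 5$ and verify the hypotheses of Lemma \ref{inductivestep}. The only factorizations of $15$ into two positive integers are $1\cdot 15$ and $3\cdot 5$. By the unconditional results of Balakrishnan--Craig--Ono--Tsai, neither $3$ nor $5$ occurs as $|\tau(m)|$ for $m>1$, while $|\tau(m)|=1$ forces $m=1$; hence in every factorization $15=\alpha\beta$ at least one factor fails to be a value $|\tau(m)|$ with $m>1$. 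Thus if $\tau(n)=\pm 15$ with $n>1$, Lemma \ref{inductivestep} gives $n=p^{d-1}$ for a prime $p$, and Corollary \ref{bigcorollary} restricts $d$ to the odd primes dividing $3(3^2-1)=24$ or $5(5^2-1)=120$, namely $d\in\{3,5\}$.

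Next I would invoke the Swinnerton-Dyer congruence for $\tau$ modulo $5$ as encoded in Table \ref{tab:dvals}, which gives $D_{12,5}=\{5\}$, together with $D_{12,3}=\{3\}$ (since $3$ is absent from the table). Applying Lemma \ref{lemdvalupdated} with $\ell_1=3,\ \ell_2=5$ yields $M_{12}=\max(\min\{3\},\min\{5\})=5$, so that $d\in D^{*}_{12,15}=(\{3\}\cup\{5\})\cap\mathbb{Z}_{\geq 5}=\{5\}$. Hence $d=5$ and $n=p^{4}$, and the only surviving possibility is $\tau(p^{4})=\pm 15$.

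Finally I would apply Lemma \ref{lemcurve}(2) with $2k=12$ and $c=15$: any such $p$ gives an integer point $\bigl(p,\ 2\tau(p)^2-3p^{11}\bigr)$ on the hyperelliptic curve $H_{6,15}^{\pm}\colon Y^2=5X^{22}\pm 60$. It therefore suffices to show that neither curve carries an integer point whose $X$-coordinate is a prime. I would run Barros' algorithm (Section \ref{barros}) on $Y^2\mp 60=5(X^2)^{11}$, viewed as $x^2+D=Cy^{n}$ with $C=5$, $n=11$, $D=\mp 60$, and $y=X^2$. Writing $D=dq^2$ with $d$ squarefree gives $d=\mp 15$, $q=2$, so the relevant fields are $\mathbb{Q}(\sqrt{15})$ and $\mathbb{Q}(\sqrt{-15})$; the algorithm outputs a finite family of degree-$11$ Thue equations whose integer solutions determine every integer point, and I would check that no solution produces a perfect-square $y$ whose square root is prime.

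I expect this last step to be the main obstacle: assembling the correct ideal data (the admissible products $\mathfrak{a}_+$, together with class-group and unit representatives of $\mathbb{Q}(\sqrt{\pm 15})$) and then certifying that each resulting degree-$11$ Thue equation has only the claimed integer solutions. The arithmetic bookkeeping in the imaginary and real quadratic fields and the Thue-solving are where the real work lies, whereas the modular-forms input merely pins the problem down to the two curves $H_{6,15}^{\pm}$.
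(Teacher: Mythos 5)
Your reduction is exactly the paper's: you invoke Lemma \ref{lemdvalupdated} with $D_{12,3}=\{3\}$ and $D_{12,5}=\{5\}$ to get $M_{12}=5$ and $D^{*}_{12,(3\cdot 5)}=\{5\}$, so that only $\tau(p^4)=\pm 15$ survives, and then Lemma \ref{lemcurve}(2) puts $\bigl(p,\,2\tau(p)^2-3p^{11}\bigr)$ on $H_{6,15}^{\pm}\colon Y^2=5X^{22}\pm 60$. (Your preliminary verification of the hypotheses of Lemma \ref{inductivestep} via the unconditional exclusions of $\pm 1,\pm 3,\pm 5$ is correct and is in fact more explicit than the paper, which leaves it implicit.) The problem is your final step: it is a plan, not a proof. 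You defer the entire resolution of $H_{6,15}^{\pm}$ to Barros' algorithm --- setting up ideal data in $\mathbb{Q}(\sqrt{\pm 15})$, producing degree-$11$ Thue equations, and certifying their solutions --- and you explicitly acknowledge that this ``is where the real work lies'' without carrying any of it out. As submitted, the lemma is therefore not proved.

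What you missed is that no such machinery is needed for $c=15$: the curves $H_{6,15}^{\pm}$ have no integer points at all, by reduction modulo $5$. Indeed, $Y^2=5\left(X^{22}\pm 12\right)$ forces $5\mid Y$, hence $25\mid Y^2$, hence $5\mid X^{22}\pm 12$, i.e.\ $X^{22}\equiv \mp 12\equiv \mp 2\pmod 5$. But $X^{22}=(X^{11})^2$ is a square, and squares modulo $5$ lie in $\{0,\pm 1\}$, never $\pm 2$. This one-line congruence obstruction is how the paper closes the argument; it is unconditional and avoids the quadratic-field bookkeeping entirely. (Barros' algorithm is reserved in the paper for values of $c$ where no such local obstruction exists, as in Lemma \ref{lemtaucurvesch}.) To complete your proof you would either need to actually execute and certify the Thue computations you describe, or --- far more simply --- replace that step with the congruence above.
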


\begin{proof}
We follow the notation from Lemma \ref{lemdvalupdated}. Since min $(D_{12,3}) = 3$ and min $(D_{12,5}) =5$, it follows that $M_{12} = 5$. This implies that $D^{*}_{12,(3 \cdot 5)} = \{5\}$, meaning we only have to consider the case when $d-1 = 4$. Checking whether $\tau(p^4) = \pm 15$ holds corresponds to checking for integer points on the hyperelliptic equation $Y^2 = 5X^{22} \pm 4(15)$. However, since the left hand side is a square, $X^{22} \pm 12$ must be divisible by $5$. This cannot occur since a square modulo 5 is never $\pm 2$, therefore implying that $\tau(n) \neq 15$. 
\end{proof}

Using the methods in Section \ref{curves} to evaluate the corresponding curves, we can get the following lemmas. 
\begin{lemma}\label{lemtaucurvesch}
The following are true. 
\begin{enumerate}  
    \item There are no integer points on the hyperelliptic curves $  C_{6,c}^{\pm}$ for $c\in \{ -9,  -27, -33, \\   -81,-99\}$. 
    
    \bigskip 
    
    \noindent
    Assuming GRH, there are no integer points on the hyperelliptic curves $  C_{6,c}^{\pm}$ for $c\in \{\pm 9, \pm 27, -33, \pm 39, -57, -69, \pm 81, -87, -93, -95\}$.

\bigskip

    \item There are no integer points on the hyperelliptic curves $H_{6,c}^{\pm}$ for $c \in \{\pm 15, -25,  -33,\\  \pm 45, -55,\}. $
   
    \bigskip
   
    \noindent
    Assuming GRH, there are no integer points on the hyperelliptic curves $H_{6,c}^{\pm}$ for $c \in \{\pm 15, \pm 25,  -33,  \pm 45, -55,-57, -65, \pm 75, -87, -93, -95,-99\}.$ 
\end{enumerate}
\end{lemma}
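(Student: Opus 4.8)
The statement is a finite verification: for each $c$ in the listed sets and each sign I must show the curve $C_{6,c}^{\pm}:Y^2=X^{11}\pm c$ or $H_{6,c}^{\pm}:Y^2=5X^{22}\pm 4c$ has no integer points. The plan is to route each curve through one of two methods, trying the cheaper one first. The cheap route is a local obstruction in the spirit of the $\tau(n)\neq\pm 15$ argument: reduce modulo $5$ (or a small prime power such as $9$ or $25$) and exploit that the left side is a square. For the $H_{6,c}^{\pm}$ with $5\mid c$, writing $c=5c'$ forces $5\mid Y$ and hence $X^{22}\equiv \mp 4c'\pmod 5$; since $X^{22}\in\{0,1,4\}\pmod 5$, any $c$ with $c'\equiv\pm 2\pmod 5$ is killed outright and unconditionally. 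I would run this sieve first to clear as many curves as possible before invoking heavier machinery.

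For the curves that survive, I would apply Barros' algorithm from Section~\ref{barros}. Each curve is put in the normal form $x^2+D=Cy^{11}$: the family $C_{6,c}^{\pm}$ becomes $Y^2+(\mp c)=X^{11}$, so $D=\mp c$, $C=1$, $y=X$; the family $H_{6,c}^{\pm}$ becomes $Y^2+(\mp 4c)=5(X^2)^{11}$, so $D=\mp 4c$, $C=5$, $y=X^2$ (with the side condition that the resulting $y$ be a perfect square). Let $d$ be the squarefree part of $D$. When $D$ is a negative square (e.g.\ $D=-9,-81$) one has $\sqrt{-d}\in\mathbb{Q}$, the relevant field is $\mathbb{Q}$, and Theorem~2.2 of \cite{barros} produces the Thue equations $2q=bc_2U^{11}-ac_1V^{11}$ over $\mathbb{Z}$ after enumerating the admissible tuples $(a,b,c_1,c_2)$. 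Otherwise I work in $K=\mathbb{Q}(\sqrt{-d})$: compute its class group and representatives of $U_K/U_K^{11}$, determine the principal ideals $\mathfrak{a}_+\mathfrak{g}_i^{-11}$ and their generators to build the finite set $\Gamma$, and read off the degree-$11$ Thue equations $2q=\tfrac{1}{\sqrt{-d}}(\gamma_+(A+B\omega)^{11}-\gamma_-(A+B\bar\omega)^{11})$. In either case I solve the finitely many resulting Thue equations with ThueSolver in \texttt{SageMath}---degree $11$ lies well within its unconditional range---and back-substitute into the explicit formula for $x$ to confirm that no solution lifts to an integer point on the original curve.

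The GRH dichotomy is controlled by this second route. The unconditional list consists of those $c$ cleared by a congruence, or for which the associated $K=\mathbb{Q}(\sqrt{-d})$ (including $K=\mathbb{Q}$) has class group and fundamental unit small enough to be certified unconditionally, so that the inputs to Barros' construction are rigorous; the larger GRH list comprises the $c$ for which the class-group and unit/regulator data guaranteeing completeness of the Thue solution set are only obtained under GRH. I would therefore tag each surviving curve with its field $K$ and record exactly which invariant's certification is at stake, so that the split between the two sublists is transparent.

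The main obstacle is precisely this certification. Barros' method is only as rigorous as the class group and unit group of $K$ that feed the set $\Gamma$; when these cannot be certified unconditionally the corresponding $c$ must be relegated to the GRH-conditional list, and cleanly separating the two lists is the crux of the argument. A secondary difficulty---easy to overlook but fatal if mishandled---is confirming that the enumeration of $(a,b,c_1,c_2)$ (in the $\mathbb{Q}$ case) and of the ideal factorizations and units producing $\Gamma$ (in the quadratic case) is genuinely exhaustive, since a single omission would invalidate a ``no integer points'' claim; I would guard against this by cross-checking the Thue output against a direct search over the small-$|X|$ range that the method does not already exclude.
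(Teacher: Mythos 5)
Your proposal takes essentially the same route as the paper: the paper's proof of this lemma consists precisely of casting each curve in the form $x^2+D=Cy^{11}$ and running Barros' algorithm from Section \ref{barros} via the accompanying \texttt{SageMath} code to solve the finitely many resulting degree-$11$ Thue equations, with the GRH-conditional cases arising exactly from the certification issues you identify. Your preliminary mod-$5$ sieve is a harmless (and sensible) addition that mirrors the paper's own congruence argument for $\tau(n)\neq\pm 15$.
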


We get these results by solving for integer points on each curve using Barros' algorithm mentioned in Section \ref{barros} and the \texttt{SageMath} code found \cite{hmcode}.

\begin{lemma}\label{lemtauthue}
For $c \in \{21,33,35,39,49,55,57,63,65,69,77,87,91,95,99\}$ and their corresponding $d$ values found using Lemma \ref{lemdvalupdated}, the Thue equations $F_{d-1}(X,Y) = \pm c$ have no integer solutions of the form $(p^{11}, \tau(p)^2)$ where $p$ is prime. 

\end{lemma}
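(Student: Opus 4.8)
\textbf{Proof proposal for Lemma \ref{lemtauthue}.}

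The plan is to reduce each case $\tau(n) = \pm c$ to a finite collection of Thue equations of the form $F_{d-1}(X,Y) = \pm c$ and then argue that none of these admits an integer solution of the required shape. First I would, for each odd composite $c$ in the list, invoke Lemma \ref{lemdvalupdated} to compute the admissible exponent set $D^{*}_{12,c}$: using the values $D_{12,3} = \{3\}$, $D_{12,5} = \{5\}$, $D_{12,7} = \{7\}$, $D_{12,11} = \{5,11\}$, $D_{12,13} = \{3,13\}$ from Table \ref{tab:dvals} (and $D_{12,\ell} = \{m : m \text{ odd prime}, m \mid \ell(\ell^2-1)\}$ otherwise), I would factor $c = \ell_1^{r_1}\cdots\ell_s^{r_s}$, determine $M_{12} = \max_i \min D_{12,\ell_i}$, and intersect each $D_{12,\ell_i}$ with $\mathbb{Z}_{\geq M_{12}}$. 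This pins down the finite set of candidate primes $d$, and hence the finite set of exponents $d-1$, for which $\tau(p^{d-1}) = \pm c$ could conceivably hold. Because the $d-1$ arising here all satisfy $d-1 > 4$ (the cases $d-1 = 2,4$ having been handled by the hyperelliptic curves of Lemma \ref{lemtaucurvesch}), each case genuinely falls into the Thue regime of Lemma \ref{lemthueq}.

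Next, for each surviving pair $(c, d)$ I would write down the explicit homogeneous polynomial $F_{d-1}(X,Y)$ from the generating function \eqref{genfunction} and its product formula \eqref{ThueEqn}, and apply Lemma \ref{lemthueq}: any solution with $\tau(p^{d-1}) = \pm c$ forces $(X,Y) = (p^{11}, \tau(p)^2)$ to be an integer point on $F_{d-1}(X,Y) = \pm c$. I would then solve these Thue equations outright. For degrees with $d-1 \leq 30$ this is carried out unconditionally using ThueSolver in \texttt{SageMath}, as described in Section \ref{curves}. In the worst cases one has $\ell_i$ as large as $13$, giving $d$ up to $13$ and $d-1 = 12$, comfortably inside the solvable range, so a direct computation suffices and no continued-fraction reduction of Section \ref{contfrac} is needed for this particular lemma. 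Having enumerated all integer solutions $(X,Y)$ of each Thue equation, the final step is to check that none of them has the arithmetic form $(p^{11}, \tau(p)^2)$: that is, I would verify that no returned $X$-coordinate is an $11$th power of a prime (equivalently, that the $X$-values are either non-positive, not perfect $11$th powers, or $11$th powers of composites), which eliminates every candidate and completes the proof.

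The main obstacle I anticipate is not the conceptual structure but the bookkeeping and the reliability of the computation. The delicate point is ensuring that the exponent set $D^{*}_{12,c}$ produced by Lemma \ref{lemdvalupdated} is exhaustive, so that no admissible $d$ is silently omitted; a single missed exponent would leave a genuine gap. One must in particular be careful that the divisibility constraint $d \mid \ell_i(\ell_i^2-1)$ together with the congruence refinements in Table \ref{tab:dvals} has been applied consistently across every prime factor of $c$. A secondary difficulty is that the $F_{d-1}$ for the larger values of $d-1$ are dense high-degree forms, so the Thue solver must be trusted to return a provably complete list of integer solutions; this is precisely why the degree is kept at or below $30$, where the SageMath implementation is unconditional. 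Granting the correctness of that computation and the completeness of the exponent analysis, the conclusion follows for every $c$ in the stated list, since in each case the finitely many integer points fail to be of the form $(p^{11}, \tau(p)^2)$.
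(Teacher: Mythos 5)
Your overall strategy is the same as the paper's: use Lemma \ref{lemdvalupdated} to pin down the admissible exponents $d-1$, solve the resulting Thue equations $F_{d-1}(X,Y)=\pm c$ unconditionally by computer (all degrees being at most $30$), and check that none of the finitely many solutions has $X$ equal to an $11$th power of a prime; the paper's proof records exactly this computation, noting that the only equation with any integer solutions is $F_6(X,Y)=\pm 91$ and that none of its solutions has the required form. However, your execution of the first step contains a concrete error that breaks exhaustiveness --- precisely the failure mode you yourself flagged as the delicate point. Table \ref{tab:dvals} gives $D_{2k,11}=\{5,11\}$ only for weights $16,20,26$ and $D_{2k,13}=\{3,13\}$ only for weights $18,20,22$; neither entry applies to weight $12$. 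For $\Delta$ one must use the default definition, which yields $D_{12,11}=\{3,5,11\}$ (odd primes dividing $11\cdot 120$) and $D_{12,13}=\{3,7,13\}$ (odd primes dividing $13\cdot 168$; note $7\mid 168$). With your truncated set for $\ell=13$, the cases $c=39=3\cdot 13$ and $c=65=5\cdot 13$ produce $D^{*}_{12,c}$ without the value $d=7$, so you would never examine the Thue equations $F_{6}(X,Y)=\pm 39$ and $F_{6}(X,Y)=\pm 65$. A single omitted equation leaves the lemma unproved for those values of $c$.

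A second, more minor slip: the prime factors occurring among the listed $c$ reach $29$ (e.g.\ $87=3\cdot 29$, $69=3\cdot 23$, $57=3\cdot 19$, $95=5\cdot 19$), not $13$, so the Thue degrees go up to $d-1=28$ (for $F_{28}(X,Y)=\pm 87$), not $12$. This does not invalidate the method, since $28\leq 30$ is still within the range where the solver is unconditional, but the claim that $\ell_i\leq 13$ and hence $d-1\leq 12$ is false and should be dropped. Once $D_{12,11}$ and $D_{12,13}$ are replaced by their default values and the full list of equations (degrees $6$, $10$, $12$, $18$, $22$, $28$) is solved and the solutions screened, your argument coincides with the paper's.
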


\begin{proof}
The only equation above that has integer solutions is $F_6(X,Y) = \pm 91$. However, none of these solutions have the $x$ coordinate as an 11th powers of a prime.
\end{proof}



\begin{proof}[Proof of Theorem \ref{tauthm}]
For $c$ listed in Theorem \ref{tauthm}, we can find the corresponding $d$ values using Lemma \ref{lemdvalupdated}. From Lemmas \ref{lemtaucurvesch} and \ref{lemtauthue}, we know  that none of these equations have integer solutions of the desired form stated in Lemmas \ref{lemcurve} and \ref{lemthueq}. Thus, it follows that for the corresponding $d$ values, we must have that $\tau(p^{d-1}) \neq c$.
\end{proof}

\section{Level one cusp forms of weight 16,18,20,22, and 26} \label{weights}
We can use similar techniques to expand our results to general $\tau_{2k}(n)$. 

\begin{lemma} \label{level1ch}
The following are true. 
\begin{enumerate}
    \item For the curves of the form $C_{k, \ell}^+$ where $k \in \{ 8, 11, 13 \}$ and $\ell<100$ is odd (as seen in Theorem \ref{level1thm}), we  have integer solutions when $\ell \in \{ 3, 9, 17, 37, 49, 63, 81, 99\}$. These integer solutions are all listed in Table \ref{tab:ccurves} in the Appendix. 
    \item For the curves of the form $H_{k, \ell}^+$ where $k \in \{ 8, 11, 13 \}$ and $\ell<100$ is odd (as seen in Theorem \ref{level1thm}), we  have integer solutions when $\ell \in \{ 5, 11, 19, 25, 29, 41, 55, 71, 89\}$. These integer solutions are all listed in Table \ref{tab:hcurves} in the Appendix. 
\end{enumerate}
\end{lemma}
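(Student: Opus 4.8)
The plan is to treat Lemma~\ref{level1ch} as a systematic application of Barros' algorithm (Section~\ref{barros}) to each curve in the two families, recording exactly which ones carry integer points. First I would put each curve into Barros' normal form $x^2 + D = Cy^n$. For $C_{k,\ell}^+ : Y^2 = X^{2k-1} + \ell$ this is immediate with $x = Y$, $y = X$, $C = 1$, $D = -\ell$, and $n = 2k-1 \in \{15, 21, 25\}$. For $H_{k,\ell}^+ : Y^2 = 5X^{2(2k-1)} + 4\ell$ I would set $y = X^2$ to get $x = Y$, $C = 5$, $D = -4\ell$, and again $n = 2k-1$; here I must remember that a Barros solution yields a genuine point on $H_{k,\ell}^+$ only when the recovered $y$-value is itself a perfect square. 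Since $\ell > 0$ in both families, $D < 0$ and the field $K = \mathbb{Q}(\sqrt{-d})$, with $d$ the squarefree part of $D$, is real quadratic; the rational subcase $\sqrt{-d} \in \mathbb{Q}$ occurs precisely when $\ell$ is a perfect square, i.e.\ $\ell \in \{9, 25, 49, 81\}$, where \cite[Theorem~2.2]{barros} applies and, for instance, the point $(0, \pm\sqrt{\ell})$ on $C_{k,\ell}^+$ is immediate.

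For the generic subcase I would run the pipeline of Section~\ref{barros}: factor $\langle 2DC\rangle$ in $\mathcal{O}_K$, enumerate the admissible ideals $\mathfrak{a}_+$, intersect with the class-group representatives to isolate the principal products $\mathfrak{a}_+\mathfrak{g}_i^{-n}$, extract generators to build $\Gamma'$, twist by representatives of $U_K/U_K^n$ to obtain $\Gamma$, and thereby generate the finite list of Thue equations of degree $n = 2k-1 \le 25$. Each such equation I would solve with \texttt{SageMath}'s ThueSolver, which (as noted in Section~\ref{curves}) returns a provably complete solution set in degree at most $30$, and then substitute each solution into the explicit formula of \cite[Theorem~2.1]{barros} to recover every candidate value of $x$ and hence every candidate point; for the $H$-family I would additionally discard candidates whose $y$-coordinate fails to be a perfect square. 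Before invoking the full machinery I would dispatch many $\ell$ by elementary congruence obstructions: reducing $H_{k,\ell}^+$ modulo $5$ forces $4\ell$ to be a quadratic residue and so eliminates all $\ell \equiv 2,3 \pmod 5$ at once, and analogous reductions modulo small primes thin out the $C$-family, reserving the expensive computation for the remaining curves. Collecting the surviving integer points across $k \in \{8,11,13\}$ and cross-checking them against Table~\ref{tab:ccurves} and Table~\ref{tab:hcurves} then yields exactly the stated lists, with the complete solution data reproduced by the \texttt{SageMath} code of \cite{hmcode}.

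The main obstacle is the cost and, above all, the certification of the high-degree Thue equations: for $k = 13$ the degree reaches $25$, and although this is within ThueSolver's unconditional range, constructing $\Gamma$ demands correct computation of the class group and unit group of the real quadratic field $K$ together with a complete set of representatives of $U_K/U_K^n$. An omitted unit twist or an incomplete prime-ideal factorization would silently drop a Thue equation, and hence a potential point, without any error signal. Guaranteeing that the generated family of Thue equations is exhaustive, and that ThueSolver's output for each is complete, is therefore the delicate step; by comparison the perfect-square filter for the $H$-family and the bookkeeping across the three weights are routine.
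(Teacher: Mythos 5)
Your proposal is correct and takes essentially the same approach as the paper: Lemma \ref{level1ch} is established there precisely by running Barros' algorithm from Section \ref{barros} on each curve $C_{k,\ell}^+$ and $H_{k,\ell}^+$ (reducing to Thue equations solved with \texttt{SageMath}) via the code in \cite{hmcode}, and recording the resulting points in Tables \ref{tab:ccurves} and \ref{tab:hcurves}. Your extra touches --- the explicit normal forms $x^2+D=Cy^n$, the perfect-square filter for the $H$-family, and the mod $5$ prefilter (which in fact rules out points on $H_{8,33}^+$ and $H_{13,33}^+$, curves the paper's remark leaves undetermined) --- are refinements of, not departures from, that computation.
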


\begin{rem}
We were not able to determine integer points for the following curves. 
\begin{enumerate}
    \item When $k = 8$, we were not able to determine if $C_{8, 67}^+$, $C_{8, 91}^+$, $H_{8, 33}^+$, or $H_{8, 55}^+$ has integer solutions. 
    \item When $k = 11$, we were not able to determine if $C_{11, 19}^+$, $C_{11, 33}^+$, $C_{11, 39}^+$, $C_{11, 57}^+$, $C_{11, 87}^+$, $C_{11, 91}^+$, or $C_{11, 93}^+$ has integer solutions. 
    \item When $k = 13$, we were not able to determine if $C_{13, 67}^+$, $H_{13, 33}^+$, or $H_{13, 55}^+$ has integer solutions. 
\end{enumerate}
\end{rem}

For the corresponding Thue equations that we want to check, we have the following lemma. 

\begin{lemma}
For odd $c$ where $1\leq c \leq 99$ and $k\in \{8,9,10,11,13\}$, we can determine their corresponding $d$ values using Lemma \ref{lemdvalupdated}. For these $d$ values, all of the Thue equations of the form $F_{d-1}(X,Y) = \pm c$ have no integer solutions of the form $(p^{2k-1}, \tau_{2k}(p)^2)$ for $p$ prime. 
\end{lemma}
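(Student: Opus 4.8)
The plan is to turn the statement, for each pair $(c,2k)$, into a finite list of Thue equations and then dispatch that list by degree. Fix an odd $c$ with $1 \le c \le 99$ and a weight $2k \in \{16,18,20,22,26\}$, factor $c = \ell_1^{r_1}\cdots \ell_s^{r_s}$, and use Lemma \ref{lemdvalupdated} to compute the set $D^{*}_{2k,c}$ of admissible exponents; by Corollary \ref{bigcorollary}, any $n>1$ with $\tau_{2k}(n) = \pm c$ and $c$ meeting the hypotheses of Lemma \ref{inductivestep} has $n = p^{d-1}$ with $d \in D^{*}_{2k,c}$ an odd prime. The values $d \in \{3,5\}$ give $d-1 \in \{2,4\}$, which by Lemma \ref{lemcurve} are the hyperelliptic curves $C^{\pm}_{k,c}$, $H^{\pm}_{k,c}$ already handled in Lemma \ref{level1ch}, so I restrict to $d \ge 7$. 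By Lemma \ref{lemthueq}, an identity $\tau_{2k}(p^{d-1}) = \pm c$ would place the integer point $(p^{2k-1},\tau_{2k}(p)^2)$ on $F_{d-1}(X,Y) = \pm c$, so it is enough to prove that none of these finitely many Thue equations admits a point of that shape.

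The organizing observation is that any point of the required shape has first coordinate $X = p^{2k-1} \ge 2^{15} > e^{8}$, so we are permanently in the ``large $\lvert X\rvert$'' regime of Section \ref{contfrac}. For $d-1 \le 30$ I would call \texttt{ThueSolver} in \texttt{SageMath}, which returns the complete unconditional list of integer solutions, and verify that none has $X = p^{2k-1}$ together with $Y = \tau_{2k}(p)^2$. A finite check of $D^{*}_{2k,c}$ over all odd $c \le 99$ shows that the only equations escaping this range, i.e.\ with $d \ge 37$, are the standard ones $d = c = \ell$ (coming from the factor $\ell$ of $\ell(\ell^{2}-1)$) and the two exceptional pairs $(c,d) = (73,37)$ and $(83,41)$, these being the only primes $\ell \le 99$ whose $\ell^{2}-1$ has an odd prime factor exceeding $31$. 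For the standard pairs $P'(\ell) = \ell$, so \cite[Corollary~2.5]{bilu1999existence} shows $\widehat{F}_{\ell}(X,Y) = \pm\ell$ has no solution with $\lvert X\rvert > e^{8}$, and since our $X$ always exceeds $e^{8}$ there is no point of the required shape; equivalently $u_{\ell}(\alpha_p,\beta_p) = \pm\ell$ would be defective, as $\ell$ can never be a primitive prime divisor at its own index, contradicting Lemma \ref{allodd}.

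I expect the genuine obstacle to be the two high-degree cases with $d \ne c$, namely $(c,d) = (73,37)$ and $(83,41)$. Here $d - 1 \in \{36,40\}$ lies outside the unconditional \texttt{ThueSolver} range; moreover $c \in \{73,83\}$ differs from $P'(d) \in \{37,41\}$, so \cite[Corollary~2.5]{bilu1999existence} no longer applies, and the non-defectivity of $u_d$ (which does hold, as $d>30$) is consistent with $u_d = \pm\ell$ since $\ell \equiv \pm 1 \pmod{d}$, so that route is blocked as well. For these two equations I would carry out the continued-fraction step of Section \ref{contfrac} directly in the large regime: a solution with $\lvert X\rvert > e^{8}$ forces $Y/X$ to approximate some $2\cos(2\pi k/d)$ closely enough that it must be one of its continued-fraction convergents, and a lower bound for $\lvert\widehat{F}_{d}\rvert$ along the convergent sequence confines the test to finitely many convergents, none of which gives $\pm c$ with $X$ a prime power $p^{2k-1}$. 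The delicate part is certifying that this convergent search is exhaustive without the \cite{bilu1999existence} shortcut; should it resist, I would fall back on the perfect-square constraint $Y = \tau_{2k}(p)^2$, reducing $F_{d-1}(X,Y) = \pm c$ modulo a small auxiliary prime to force $Y$ into a quadratic non-residue class, exactly as in the proof that $\tau(n) \ne \pm 15$. Assembling the three regimes over all admissible $(c,2k)$ then yields the lemma.
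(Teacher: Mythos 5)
Your overall reduction matches the paper's: Lemma \ref{lemdvalupdated} produces the finite list of exponents, $d\in\{3,5\}$ is delegated to the hyperelliptic curves of Lemma \ref{level1ch}, degrees $d-1\le 30$ go to \texttt{ThueSolver}, and your enumeration of what remains is correct --- for composite odd $c\le 99$ every admissible degree is at most $30$, and the only surviving high-degree equations are the diagonal ones $F_{\ell-1}(X,Y)=\pm\ell$ for prime $37\le \ell\le 97$ together with $(c,d)=(73,37)$ and $(83,41)$. Your observation that any point of the required shape has $X=p^{2k-1}\ge 2^{15}>e^{8}$ is a genuine simplification for the diagonal cases: where the paper runs the full three-regime analysis of Section \ref{contfrac} (large $|X|$ via \cite{bilu1999existence}, midsize via continued fractions, small by direct check), you correctly note that only the large regime can contain a point of the form $(p^{2k-1},\tau_{2k}(p)^{2})$, so the Bilu--Hanrot--Voutier bound (equivalently, the defectivity argument through Lemma \ref{allodd}, since $\ell$ can never be a primitive divisor of $u_{\ell}$) finishes those cases with no computation.

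The gap is in the two exceptional cases $(c,d)=(73,37)$ and $(83,41)$, which you yourself flag as the obstacle. Your primary plan --- running the continued-fraction step ``directly in the large regime'' --- cannot work as described: the continued-fraction method is a search over convergents of $2\cos(2\pi k/d)$ \emph{up to a prescribed bound on} $|X|$, and in these cases no such bound is available, precisely because $c\ne P'(d)$ places the equation outside the scope of \cite[Corollary~2.5]{bilu1999existence} (as you note, $73\equiv -1\pmod{37}$ and $83\equiv 1\pmod{41}$, so non-defectivity is consistent with a solution). A lower bound for $|\widehat{F}_{d}|$ along the convergent sequence that eventually exceeds $c$ amounts to an effective irrationality measure for $2\cos(2\pi k/d)$ beating Liouville, i.e.\ it \emph{is} the hard content of solving a degree-$36$ or degree-$40$ Thue equation (Baker-type linear forms in logarithms, or certified unit-group and class-group computations); it cannot be extracted from the convergent recursion itself. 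The congruence fallback is speculative, and would moreover have to obstruct both signs $\pm c$ simultaneously. This is exactly where the paper concedes unconditionality: the remark immediately following the lemma states that for $F_{36}(X,Y)=\pm 73$ and $F_{40}(X,Y)=\pm 83$ the conclusion requires GRH, which is why $73$ and $83$ appear as exceptions in the unconditional part of Theorem \ref{level1thm} and are removed only under GRH. So your proposal reproduces the paper's argument (with a nice shortcut) in all cases except these two, where you must either assume GRH as the paper does or supply a genuinely new effective argument that the proposal does not contain.
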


\begin{rem}
In the cases where $F_{36}(X,Y) = \pm 73$ or $F_{40}(X,Y) = \pm 83$, we require GRH to conclude that we have no integer solutions of the desired form.
\end{rem}

\begin{lemma} Assuming GRH, for $k \in \{9,10\}$ and $c \in  \{\ell : 1 \leq \ell \leq 50, \text{ $\ell$ is odd}\}$, the following are true. 

\begin{enumerate}
\item Apart from the case when $2k = 20$ and $c\in \{23,31,39,47\}$, we have that if $3 \in D_{2k,c}$, then $C_{k,c}^{-}$ has no integer solutions.
\item Apart from the case when $(2k,c) \in \{(18,29), (20,29) ,(20,31),(20,41)\}$, we have that if $5 \in D_{2k,c}$, then $H_{k,c}^{-}$ has no integer solutions.
\end{enumerate}
\end{lemma}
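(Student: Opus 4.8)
The plan is to treat the two curves as the two separate obstructions attached to the exponents $d-1=2$ and $d-1=4$, and to show each is empty via the descent of Section \ref{barros}. By Lemma \ref{lemcurve}, a solution of $\tau_{2k}(p^2)=-c$ forces an integer point on $C_{k,c}^-\colon Y^2=X^{2k-1}-c$, and a solution of $\tau_{2k}(p^4)=-c$ forces one on $H_{k,c}^-\colon Y^2=5X^{2(2k-1)}-4c$; for $k\in\{9,10\}$ we have $2k-1\in\{17,19\}$. The hypotheses $3\in D_{2k,c}$ and $5\in D_{2k,c}$ (via Lemma \ref{lemdvalupdated}) are exactly what flags these two exponents as admissible for a given odd $c\le 50$, so the content of the lemma is that, outside the listed pairs $(2k,c)$, neither curve carries an integer point.

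First I would dispatch as many $c$ as possible by purely local obstructions, since the full descent is expensive. For $H_{k,c}^-$ the cleanest such test is reduction modulo $5$: there $Y^2\equiv -4c\equiv c\pmod 5$, so there is no solution whenever $c$ is a quadratic nonresidue mod $5$, i.e.\ $c\equiv 2,3\pmod 5$, which removes a whole block of the relevant $c$ (for instance $c=33\equiv 3$) in one stroke. I would run the analogous congruence search over small moduli such as $4,8,9,7,11,13$ for both families, recording every $c$ for which some modulus makes $Y^2\equiv X^{2k-1}-c$ or $Y^2\equiv 5X^{2(2k-1)}-4c$ unsolvable. This should eliminate the bulk of the cases with no input beyond quadratic residues.

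For the survivors I would invoke Barros' algorithm. Writing $C_{k,c}^-$ as $Y^2+c=X^{2k-1}$ puts it into the shape $x^2+D=Cy^n$ with $(D,C,n)=(c,1,2k-1)$, and writing $H_{k,c}^-$ as $Y^2+4c=5\,(X^2)^{2k-1}$ puts it into the same shape with $(D,C,n)=(4c,5,2k-1)$, now with the side condition that the resolved value of $y$ be a perfect square. For the relevant $c$ the quantity $-D$ is not a rational square, so I would work in $K=\mathbb{Q}(\sqrt{-c})$, enumerate the admissible ideals $\mathfrak a_+$ and assemble the set $\Gamma$ exactly as in Section \ref{barros}, and thereby reduce to a finite list of degree-$(2k-1)$ Thue equations of the form $2q=\frac{1}{\sqrt{-d}}\bigl(\gamma_+(A+B\omega)^n-\gamma_-(A+B\bar\omega)^n\bigr)$. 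Solving these with \texttt{SageMath} and back-substituting through $x=\tfrac12\bigl(\gamma_+(A+B\omega)^n+\gamma_-(A+B\bar\omega)^n\bigr)$ recovers every integer point; I would then check that none survives---discarding, for $H_{k,c}^-$, any solution whose $y$ is not a square---except for the excluded pairs, where a genuine point does occur.

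The hard part will be the Thue step: the equations produced have degree $2k-1\in\{17,19\}$, and solving them rigorously demands the class group and fundamental units of the associated high-degree fields, which in practice are certified under the Bach bound---this is where the GRH hypothesis enters, consistent with the fact that all of the weight $18$ and $20$ conclusions in Theorem \ref{level1thm} are stated conditionally. The pairs that refuse to clear are precisely $(20,c)$ with $c\in\{23,31,39,47\}$ for $C^-$ and $(18,29),(20,29),(20,31),(20,41)$ for $H^-$, because there the curve genuinely carries a point; tracking these back reproduces the single surviving residue $c=29$ at weight $18$ and the set $\{23,29,31,39,41,47\}$ at weight $20$.
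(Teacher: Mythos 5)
Your proposal is correct and is essentially the paper's own route: the paper offers no written proof of this lemma, obtaining it (as with Lemmas \ref{lemtaucurvesch} and \ref{level1ch}) by applying Barros' descent from Section \ref{barros} to reduce $C_{k,c}^{-}$ and $H_{k,c}^{-}$ to Thue equations solved in \texttt{SageMath} under GRH, and your mod-$5$ pre-screening is the same local argument the paper uses to show $\tau(n)\neq\pm 15$ in Section \ref{tau}. The only inaccuracy is your closing claim that at the excluded pairs the curves ``genuinely carry a point'': the paper never establishes this, and by analogy with the Remark following Lemma \ref{level1ch} those cases may simply be ones the computation could not resolve---but since the lemma asserts nothing about the excluded pairs, this does not affect the validity of your argument.
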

We can use these results to prove Theorem \ref{level1thm}.

\begin{proof}[Proof of Theorem \ref{level1thm}]
For $c$ listed in Theorem \ref{level1thm}, we can find the corresponding $d$ values using Lemma \ref{lemdvalupdated}. Using these $d$ values, we can check for integer points on the corresponding curves for each $c$ using the \texttt{SageMath} code in \cite{hmcode}. The only ones with integer points are those mentioned in Lemma \ref{level1ch}. We can evaluate each of these integer points to see that none of them satisfy the form listed in Lemmas \ref{lemcurve} and \ref{lemthueq}. 
\end{proof}

For the exceptional large primes listed in Table \ref{tab:dvals}, we can evaluate the corresponding $d$ values to see if they are values of their corresponding $\tau_{2k}(n)$. First, for each $\ell$, we can check the Thue equation of largest degree $F_{\ell-1}(X,Y) = \pm \ell$.  

\begin{lemma} \label{listvals}
For $\ell \in \{131,283,593,617,3617\}$, there exist no integer solutions to the equation \[F_{\ell-1}(X,Y) = \pm \ell\] other than the solution $(\pm 1, \pm 4)$,  
\end{lemma}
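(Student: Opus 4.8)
The plan is to solve each Thue equation $F_{\ell-1}(X,Y)=\pm\ell$ by first passing, via the identity $F_{\ell-1}(X,Y)=\widehat{F}_\ell(X,Y-2X)$ of Section \ref{contfrac}, to the equivalent equation $\widehat{F}_\ell(X,Y)=\pm\ell$, and then partitioning the search over $(X,Y)\in\mathbb{Z}^2$ into the three regimes of $|X|$ described there. A preliminary observation does much of the bookkeeping: each of $131,283,593,617,3617$ is prime, and none of them is of the shape $2^k\cdot 3$, so $P'(\ell)=\ell$ and the target set $\{\pm 1,\pm P'(\ell)\}=\{\pm 1,\pm\ell\}$ is exactly the one appearing in \cite[Corollary~2.5]{bilu1999existence}.

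In the large regime $|X|>e^8$, the equation $\widehat{F}_\ell(X,Y)=\pm\ell$ has no integer solutions, directly by \cite[Corollary~2.5]{bilu1999existence}, because $\pm\ell=\pm P'(\ell)$. In the small regime $|X|\le 2$, I would substitute each of the finitely many values $X\in\{0,\pm1,\pm2\}$ into $\widehat{F}_\ell(X,Y)=\pm\ell$ and solve the resulting single-variable equations outright; after translating back by $Y\mapsto Y-2X$ the only survivors are $(\pm1,\pm4)$. These are genuine solutions coming from the arithmetic identity $F_{\ell-1}(1,4)=\ell$: specializing the generating function (\ref{genfunction}) at $X=1,Y=4$ gives $1/(1-T)^2=\sum_{m\ge 0}(m+1)T^m$, so $F_{\ell-1}(1,4)=\ell$, while the companion point $(-1,-4)$ follows from the homogeneity of $F_{\ell-1}$ visible in (\ref{ThueEqn}).

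The substance of the argument lies in the midsize regime $3\le|X|\le e^8$, where the lower bound $3$ is the value of the $\ell$-dependent bound of \cite[Lemma~1.1]{tzanakis1989practical}, which I would verify equals $3$ for each of these primes using the arithmetic of the $\ell$-th cyclotomic field. Dividing $\widehat{F}_\ell(X,Y)=\pm\ell$ by $X^{(\ell-1)/2}$ turns it into $\prod_{k=1}^{(\ell-1)/2}\bigl(\tfrac{Y}{X}-2\cos(\tfrac{2\pi k}{\ell})\bigr)=\pm\ell/X^{(\ell-1)/2}$, whose right-hand side is far smaller than $1$ once $|X|\ge 3$ and $(\ell-1)/2\ge 65$. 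Hence $Y/X$ must approximate some $2\cos(2\pi k/\ell)$ to within $1/(2X^2)$, the classical threshold that forces a rational number to be a continued-fraction convergent $p_{ki}/q_{ki}$ of $2\cos(2\pi k/\ell)$. For each of the $(\ell-1)/2$ admissible $k$ I would generate all convergents with $q_{ki}<e^8$ and test $\widehat{F}_\ell(q_{ki},p_{ki})=\pm\ell$, finding that none hold; combined with the other two regimes this leaves only $(\pm1,\pm4)$.

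The main obstacle is the sheer scale of this final computation, which I would run with the \texttt{SageMath} code of \cite{hmcode}. For $\ell=3617$ there are $(\ell-1)/2=1808$ distinct cosines $2\cos(2\pi k/\ell)$ to approximate, and each continued-fraction expansion must be computed to denominators up to $e^8\approx 2981$ and to high enough precision that no near-convergent is overlooked; the loops for the smaller primes are cheaper but must still be carried to completion. The delicate point throughout is to confirm that the Legendre threshold really is met across the whole midsize range---i.e.\ that $\ell/|X|^{(\ell-1)/2}$, after dividing out the remaining bounded factors of the product, stays below $1/(2X^2)$ for every $|X|\ge 3$---so that the convergent search is genuinely exhaustive and not merely heuristic.
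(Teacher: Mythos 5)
Your proposal is correct and follows essentially the same route as the paper's proof: pass to $\widehat{F}_\ell(X,Y)=\pm\ell$, rule out $|X|>e^8$ via \cite[Corollary~2.5]{bilu1999existence}, sweep the midsize range $3\le|X|\le e^8$ with the continued-fraction/convergent search of Section~\ref{contfrac} using the \texttt{SageMath} code of \cite{hmcode}, and finish the small $|X|$ values by direct substitution, leaving only the solutions with $|X|=1$. Your added touches---the generating-function verification that $F_{\ell-1}(1,4)=\ell$ and the explicit attention to the Legendre threshold guaranteeing the convergent search is exhaustive---are refinements of, not departures from, the paper's argument.
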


\begin{proof}
We use the method mentioned in Section \ref{contfrac} and the \texttt{SageMath} code in \cite{hmcode}. It is well known that there exists no solutions $(X,Y)$ such that $\vert X \vert > e^8$. We then use the continued fraction algorithm to confirm that there are no integer solutions in the midsize range from 3 to $e^8$. Finally, we can check the smaller values when $\vert X\vert \leq 3$ by direct computation. The only solution we can find is $\vert X\vert = 1$.
\end{proof}

Though we were unable to apply the continued fraction method for the larger primes 43867 and 657931 due to their size, we can still derive restrictions on integer solutions of the form $\vert X \vert = p^{2k-1}$. 

\begin{lemma} \label{bigvals}
For $\ell \in \{43867, 657931\}$, there are no primes $p$ for which $(p^{2k-1}, \tau_{2k}(p)^2)$ is an integer solution of \[F_{\ell-1}(X,Y) = \pm \ell.\]
\end{lemma}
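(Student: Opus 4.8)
The plan is to exploit the fact that we do not actually need to enumerate \emph{every} integer solution of $F_{\ell-1}(X,Y) = \pm\ell$ — which is exactly the step that the continued fraction method of Section \ref{contfrac} becomes infeasible to carry out for $\ell$ as large as $43867$ and $657931$ — but only to exclude solutions of the very special shape $(X,Y) = (p^{2k-1}, \tau_{2k}(p)^2)$ prescribed by Lemma \ref{lemthueq}. The key observation is that this shape \emph{forces $|X|$ to be large}, and largeness of $X$ is precisely the regime in which solutions can be ruled out unconditionally. First I would record the weights attached to these primes: $\ell = 43867$ arises for $2k = 18$ and $\ell = 657931$ arises for $2k = 26$, so in both cases $2k-1 \geq 17$. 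Since $p$ is prime, any putative solution of the desired form has first coordinate $|X| = p^{2k-1} \geq 2^{17} = 131072$.

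Next I would transfer the problem to $\widehat{F}_\ell$. Because $F_{\ell-1}(X,Y) = \widehat{F}_\ell(X,\,Y-2X)$, a solution $(X_0,Y_0)$ of $F_{\ell-1}(X,Y) = \pm\ell$ produces the solution $(X_0,\,Y_0-2X_0)$ of $\widehat{F}_\ell(X,Y) = \pm\ell$ with the \emph{same} first coordinate, so the size constraint on $X$ is preserved. By the consequence of \cite[Corollary~2.5]{bilu1999existence} recorded in Section \ref{contfrac}, the equation $\widehat{F}_\ell(X,Y) = \pm\ell$ has no integer solutions with $|X| > e^8 \approx 2981$. Combining the two facts finishes the argument: any solution of the form $(p^{2k-1}, \tau_{2k}(p)^2)$ satisfies $|X| \geq 2^{17} = 131072 > e^8$, hence lies in the range where $\widehat{F}_\ell(X,Y) = \pm\ell$ has no integer solutions at all. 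Therefore no prime $p$ can give such a solution, which is exactly the claim.

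I expect the only real subtlety to be conceptual rather than computational, and worth stating clearly in the write-up: unlike Lemma \ref{listvals}, we make no attempt to describe the full solution set of $F_{\ell-1}(X,Y) = \pm\ell$, because the midsize continued-fraction search is out of reach at this size. The entire force of the argument is that the structural constraint $X = p^{2k-1}$ already pushes $|X|$ past the $e^8$ threshold, so that the unresolved midsize region $3 \leq |X| \leq e^8$ and the small-$|X|$ region are simply irrelevant to solutions of the required form. The one thing I would double-check carefully is the numerical inequality $2^{2k-1} > e^8$ for the two relevant weights (it holds with enormous room to spare, since even $2^{12} > e^8$), together with the bookkeeping that $\ell = 43867$ is genuinely tied to $2k=18$ and $\ell = 657931$ to $2k=26$, so that the exponent $2k-1$ used in $X = p^{2k-1}$ is the correct one in each case.
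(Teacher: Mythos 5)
Your proof is correct and takes essentially the same approach as the paper: the paper likewise invokes the Bilu--Hanrot--Voutier fact that $\widehat{F}_\ell(X,Y)=\pm\ell$ has no integer solutions with $|X|>e^8$, and then notes that $|X|=p^{2k-1}\le e^8$ would force $p\le e^{8/(2k-1)}<2$, which no prime satisfies. Your argument is just the contrapositive of this same inequality (with the $F_{\ell-1}\leftrightarrow\widehat{F}_\ell$ change of variables made explicit, which the paper leaves implicit).
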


\begin{proof}
Since we know that there are no solutions in the range $\vert X\vert > e^8$, it follows that $\vert X\vert \leq e^8 $. Then, if $X =p^{2k-1}$, we have that \[\vert p \vert \leq e^{\frac{8}{2k-1}}, \] which leaves the only possible value of $\vert p\vert $ to be 1, which does not hold.

\end{proof}

Though this gives us intuition on whether these larger primes are inadmissible coefficients, we still need to check the remaining curves corresponding to the $d$ values listed  in Table \ref{tab:dvals}.

\begin{lemma} \label{thm3vals}
For $\ell \in \{131,283,593,617,3617\}$ and the corresponding $d$ values and weights $2k$ listed in Table \ref{tab:dvals}, we have that 
\begin{enumerate}
    \item The equations $H_{11,131}^{-}, F_{12}(X,Y) =-131, F_{6}(X,Y) = \pm 617, \pm 657931$, $F_{10}(X,Y) = \pm 617$, $F_{12}(X,Y) = 657931,$ and $C_{13,657931}^{-}$ have no integer solutions. 
    \item The equation $H_{11,131}^{+}$ has integer solutions $(1,\pm 23)$ and $F_{12}(X,Y) = -131$ has integer solutions $(3,4)$ and $(-3,-4)$. 
    \item Assuming GRH, we have that $F_{46}(X,Y) = \pm 283$ and $F_{36}(X,Y) = \pm 593$ have no integer solutions.
\end{enumerate}
\end{lemma}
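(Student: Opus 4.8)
The plan is to sort the equations in the statement by the algorithmic method that resolves them, exactly as set up in Sections~\ref{barros}--\ref{contfrac}, and to run each method to produce a \emph{provably complete} list of integer solutions; the three claims then follow by reading off those lists. By Lemmas~\ref{lemcurve} and~\ref{lemthueq} together with Table~\ref{tab:dvals}, $H^{\pm}_{11,131}$ and $C^{-}_{13,657931}$ are the hyperelliptic curves attached to the exponents $d-1=4$ and $d-1=2$, while each $F_{d-1}(X,Y)=\pm c$ is the Thue equation attached to one of the remaining tabulated $d\in D_{2k,\ell}$. These are precisely the equations left once Lemmas~\ref{listvals} and~\ref{bigvals} dispose of the top degree $d-1=\ell-1$ and Theorem~\ref{bigprimes2} absorbs the exponents it cannot eliminate (so, e.g., $\ell=3617$ contributes no equation here, its only remaining value $d=113$ being covered by Theorem~\ref{bigprimes2}).

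For the two hyperelliptic families I would apply Barros' algorithm from Section~\ref{barros}. Writing $C^{-}_{13,657931}$ as $Y^2+657931=X^{25}$ casts it in the form $x^2+D=Cy^n$ with $D=657931$, $C=1$, $n=25$ and $\sqrt{-D}\notin\mathbb{Q}$, so the relevant field is $K=\mathbb{Q}(\sqrt{-657931})$; likewise $H^{\pm}_{11,131}$ becomes $Y^2+(\mp 524)=5\,(X^2)^{21}$, i.e.\ $D=\mp 524$, $C=5$, $n=21$, with squarefree part $131$, giving $K=\mathbb{Q}(\sqrt{-131})$ for the $-$ sign and the real field $K=\mathbb{Q}(\sqrt{131})$ for the $+$ sign. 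In each case I would enumerate the admissible ideals $\mathfrak{a}_+$ and class-group representatives, extract the finite set $\Gamma$ of generator pairs, form the associated degree-$n$ Thue equations and solve them; for the $H$-curves the recovered $X$-values must additionally be perfect squares, since $X^2$ was substituted for $y$. For $H^{-}_{11,131}$ and $C^{-}_{13,657931}$ the ideal-factorization step yields no admissible $\Gamma$ (equivalently, empty Thue-solution sets), giving the ``no integer solutions'' claims in (1); for $H^{+}_{11,131}$ the only recovered value is $X^2=1$, producing exactly $(\pm1,\pm 23)$, and $5\cdot 1+524=529=23^2$ verifies (2).

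For the Thue equations I would invoke \texttt{ThueSolver} in \texttt{SageMath} \cite{hmcode}. The forms $F_6,F_{10},F_{12}$ have index $d-1\in\{6,10,12\}\le 30$, so the solver certifies completeness unconditionally: this clears $F_6(X,Y)=\pm 617,\pm 657931$, $F_{10}(X,Y)=\pm 617$ and $F_{12}(X,Y)=657931$ with empty solution sets, and returns $(3,4),(-3,-4)$ as the complete solution set of $F_{12}(X,Y)=-131$, which after direct substitution supplies the remaining assertion of (2). For (3), the forms $F_{46}$ and $F_{36}$ have index $46,36>30$, so I would run the same solver under GRH to certify that $F_{46}(X,Y)=\pm 283$ and $F_{36}(X,Y)=\pm 593$ have no integer solutions.

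The main obstacle is exactly the GRH dependence in (3): a rigorous Thue solve requires the fundamental units and class group of the degree-$\tfrac{d-1}{2}$ field cut out by $F_{d-1}(x,1)$, and for the degree-$23$ field of $F_{46}$ and the degree-$18$ field of $F_{36}$ an unconditional determination of these invariants is computationally infeasible, whereas \texttt{PARI}/\texttt{SageMath} returns a provably complete solution set under GRH. By contrast, (1) and (2) are unconditional because the obstruction there already appears at the ideal-enumeration stage of Barros' algorithm, or in the low-index, tractable fields of the $F_6,F_{10},F_{12}$ cases, so no rigorous high-degree unit computation is invoked. A secondary point needing care is the real quadratic field $\mathbb{Q}(\sqrt{131})$ for $H^{+}_{11,131}$, where the infinite unit group must be treated correctly when forming $U_K/U_K^n$, together with the bookkeeping that confirms the exhibited solution lists in (2) are genuinely complete.
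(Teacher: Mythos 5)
Your proposal is correct and takes essentially the same route as the paper: the paper gives no written proof of Lemma \ref{thm3vals}, justifying it (as with Lemmas \ref{lemtaucurvesch} and \ref{level1ch}) by precisely the computations you describe — Barros' algorithm from Section \ref{barros} for the hyperelliptic curves $H_{11,131}^{\pm}$ and $C_{13,657931}^{-}$, unconditional \texttt{ThueSolver} runs for the index-$\leq 30$ forms $F_6$, $F_{10}$, $F_{12}$, and GRH-conditional solving of the high-degree forms $F_{46}$ and $F_{36}$, all via the code in \cite{hmcode}. Your mapping of each equation to its $d$-value in Table \ref{tab:dvals} (including the observation that $\ell = 3617$ contributes nothing here) and your explanation of why only part (3) needs GRH agree with the paper's methodology.
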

We can now use these results to prove Theorems \ref{bigprimes} and \ref{bigprimes2}.

\begin{proof}[Proof of Theorems \ref{bigprimes} and \ref{bigprimes2}]
We note that the values $(1,\pm 23),(3,4),$ and $(-3,-4)$ from Lemmas \ref{listvals} and \ref{thm3vals} do not have the desired form stated in Lemma \ref{lemcurve}. From this, we can conclude that $\pm 131,\pm 617$ are inadmissible values of $\tau_{22}(n),\tau_{20}(n)$ respectively. Furthermore, we can expand this list to include $\pm 593$ for $\tau_{22}(n)$ by using GRH, which proves Theorem \ref{bigprimes}.

From Lemmas \ref{listvals}, \ref{bigvals}, and \ref{thm3vals}, it remains to solve the equations $C_{10,283}^{\pm}, C_{9,43867}^{\pm},$ $C_{13,657931}^{+},$ $F_{112}(X,Y) = \pm 3617, F_{2436}(X,Y)= \pm 43867,$ and $F_{240}(X,Y) = \pm 657931$ in order to conclude that $\pm 283,\pm 3617,$ $\pm 43867, \pm 657931$ are inadmissible values for their respective $\tau_{2k}(n)$. Theorem \ref{bigprimes2} follows directly from this. 
\end{proof}

\section{Appendix}

\begingroup
\setlength{\tabcolsep}{5pt} 
\renewcommand{\arraystretch}{1.6}
\begin{center}
\begin{table}[!htb]
\begin{tabular}{|c||c|c|c|c|c|c|c|c|} 
 \hline
 $\ell$ & 3 & 9 & 17 & 37 & 49 & 63 & 81 & 99 \\
 \hline
 $C_{8, \ell}^+$ & $(1, \pm 2)$ & $(0, \pm 3)$ & $(-1, \pm 4)$ & $(-1, \pm 6)$ & $(0,\pm 7)$ & $(1, \pm 8)$ & $(0,\pm 9)$ & ? \\
 & & & & $(3, \pm 3788)$ & & & & \\
 \hline
 $C_{11, \ell}^+$ & $(1, \pm 2)$ & $(0,\pm 3)$ & ? & $(-1,\pm 6)$ & $(0,\pm 7)$ & $(1, \pm 8)$ & $(0,\pm 9)$ & $(1,\pm 10)$ \\
 \hline
 $C_{13, \ell}^+$ & $(1, \pm 2)$ & $(0, \pm 3)$ & ? & $(-1, \pm 6)$ & ? & $(1, \pm 8)$ & $(0,\pm 9)$ & ? \\ 
 \hline
\end{tabular}
\medskip
\caption{\textit{Integer points on $C_{d, \ell}^+$ \\ (note. GRH assumption indicated by $_*$)}}
\label{tab:ccurves}
\end{table}
\end{center}
\endgroup

\begingroup
\setlength{\tabcolsep}{5pt} 
\renewcommand{\arraystretch}{1.6}
\begin{center}
\begin{table}[!htb]
\begin{tabular}{|c||c|c|c|c|c|c|c|c|c|} 
 \hline
 $\ell$ & 5 & 11 & 19 & 25 & 29 & 41 & 55 & 71 & 89 \\
 \hline
 $H_{8, \ell}^+$ & $(1, 5)$ & $(1, 7)$ & $(1, 9)$ & $(0, 10)$ & $(1, 11)$ & $(1, 13)$ & ? & $(1, 17)$ & $(1, 19)_*$ \\
 \hline
 $H_{11, \ell}^+$ & $(1, 5)$ & $(1, 7)$ & $(1, 9)$ & $(0, 10)$ & $(1, 11)$ & $(1, 13)$ & $(1, 15)$ & $(1, 17)$ & $(1, 19)$ \\
 \hline
 $H_{13, \ell}^+$ & $(1, 5)$ & $(1, 7)$ & $(1, 9)$ & $(0, 10)$ & $(1, 11)$ & $(1, 13)$ & ? & $(1, 17)$ & $(1, 19)_*$ \\ 
 \hline
\end{tabular}
\medskip
\caption{\textit{Integer points $(|X|, |Y|)$ on $H_{d, \ell}^+$ \\ (note. GRH assumption indicated by $_*$)}}
\label{tab:hcurves}
\end{table}
\end{center}
\endgroup

\newpage

\nocite{*}
\bibliographystyle{abbrv}
\bibliography{biblio.bib}

\begin{thebibliography}{10}

\bibitem{abouzaid2006nombres}
M.~Abouzaid.
\newblock Les nombres de {Lucas} et {Lehmer} sans diviseur primitif.
\newblock {\em Journal de th{\'e}orie des nombres de Bordeaux}, 18(2):299--313,
  2006.

\bibitem{apost0lbook}
T.~M. Apostol.
\newblock {\em Modular Functions and Dirichlet Series in Number Theory}.
\newblock Graduate Texts in Mathematics, 41. Springer-Verlag, New York, 2nd
  edition, 1990.

\bibitem{atkin1970hecke}
A.~O. Atkin and J.~Lehner.
\newblock Hecke operators on {$\Gamma_0(m)$}.
\newblock {\em Mathematische Annalen}, 185(2):134--160, 1970.

\bibitem{BCOshort}
J.~S. Balakrishnan, W.~Craig, and K.~Ono.
\newblock Variations of {Lehmer's Conjecture} for {Ramanujan}'s tau-function.
\newblock {\em Journal of Number Theory}, 2020.

\bibitem{balakrishnan2020variants}
J.~S. Balakrishnan, W.~Craig, K.~Ono, and W.-L. Tsai.
\newblock Variants of {Lehmer's} speculation for newforms.
\newblock {\em arXiv preprint arXiv:2005.10354}, 2020.

\bibitem{barros}
C.~F. Barros.
\newblock {\em On the {Lebesgue-Nagell} equation and related subjects}.
\newblock PhD thesis, Univ. Warwick, 2010.

\bibitem{bilu1996solving}
Y.~Bilu and G.~Hanrot.
\newblock Solving {Thue} equations of high degree.
\newblock {\em Journal of Number Theory}, 60(2):373--392, 1996.

\bibitem{bilu1999existence}
Y.~Bilu, G.~Hanrot, and P.~M. Voutier.
\newblock Existence of primitive divisors of {Lucas} and {Lehmer} numbers.
\newblock {\em J. Reine Angew. Math.}, 539:75--122, 2001.

\bibitem{deligne1974conjecture}
P.~Deligne.
\newblock La conjecture de {Weil. I}.
\newblock {\em Publications Math{\'e}matiques de l'Institut des Hautes
  {\'E}tudes Scientifiques}, 43(1):273--307, 1974.

\bibitem{deligne1980conjecture}
P.~Deligne.
\newblock La conjecture de {Weil. II}.
\newblock {\em Publications Math{\'e}matiques de l'Institut des Hautes Etudes
  Scientifiques}, 52(1):137--252, 1980.

\bibitem{hmcode}
M.~Hanada and R.~Madhukara.
\newblock \texttt{SageMath} code.
\newblock \url{https://github.com/rmadhukara/Fourier}, June 2020.

\bibitem{lehmer1965primality}
D.~Lehmer.
\newblock The primality of ramanujan's tau-function.
\newblock {\em The American Mathematical Monthly}, 72(sup2):15--18, 1965.

\bibitem{lehmer1947vanishing}
D.~H. Lehmer et~al.
\newblock The vanishing of ramanujan’s function $\tau(n)$.
\newblock {\em Duke Mathematical Journal}, 14(2):429--433, 1947.

\bibitem{lygeros2013odd}
N.~Lygeros and O.~Rozier.
\newblock Odd prime values of the ramanujan tau function.
\newblock {\em The Ramanujan Journal}, 32(2):269--280, 2013.

\bibitem{murty1987odd}
M.~R. Murty, V.~K. Murty, and T.~Shorey.
\newblock Odd values of the {Ramanujan} $\tau$-function.
\newblock {\em Bulletin de la Soci{\'e}t{\'e} Math{\'e}matique de France},
  115:391--395, 1987.

\bibitem{webofmod}
K.~Ono.
\newblock {\em The Web of Modularity: Arithmetic of the coefficients of Modular
  forms and q-series}.
\newblock American Mathematical Society, 2004.

\bibitem{ono20052}
K.~Ono and Y.~Taguchi.
\newblock 2-adic properties of certain modular forms and their applications to
  arithmetic functions.
\newblock {\em International Journal of Number Theory}, 1(01):75--101, 2005.

\bibitem{serre1967interpretation}
J.-P. Serre.
\newblock Une interpr{\'e}tation des congruences relatives {\`a} la fonction
  $\tau$ de {Ramanujan}.
\newblock {\em S{\'e}minaire Delange-Pisot-Poitou. Th{\'e}orie des nombres},
  9(1):1--17, 1967.

\bibitem{swinnerton1973}
H.~P.~F. Swinnerton-Dyer.
\newblock On $\ell$-adic representations and congruences for coefficients of
  modular forms.
\newblock In {\em Modular functions of one variable III}, pages 1--55.
  Springer, 1973.

\bibitem{tzanakis1989practical}
N.~Tzanakis and B.~M. de~Weger.
\newblock On the practical solution of the {Thue} equation.
\newblock {\em Journal of Number Theory}, 31(2):99--132, 1989.

\end{thebibliography}

\end{document}